%

\documentclass[preprint]{imsart}

\usepackage{amsthm,amsmath, amssymb}

\RequirePackage[numbers]{natbib}

\allowdisplaybreaks

\usepackage{graphicx}

\usepackage[a4paper, margin=1in]{geometry}





\startlocaldefs
\numberwithin{equation}{section}
\theoremstyle{plain}
\newtheorem{thm}{Theorem}[section]
\newtheorem{defi}[thm]{Definition}
\newtheorem{exmp}[thm]{Example}

\newtheorem{prop}[thm]{Proposition}
\newtheorem{rem}[thm]{Remark}
\newtheorem{cor}[thm]{Corollary}

\newcommand{\R}{\mathbb{R}}
\newcommand{\N}{\mathbb{N}}
\newcommand{\F}{\mathcal{F}}

\newcommand{\h}{\mathfrak{h}}
\newcommand{\f}{\mathfrak{f}}
\newcommand{\g}{\mathfrak{g}}

\newcommand{\inth}{\underline{h}}
\newcommand{\intg}{\underline{g}}

\newcommand{\Osc}{\textnormal{Osc}}

\endlocaldefs

\begin{document}

\begin{frontmatter}

\title{It\^{o}'s Formula for  Gaussian Processes with Stochastic Discontinuities}
\runtitle{It\^{o}'s Formula for  Gaussian Processes}


\author{\fnms{Christian} \snm{Bender}\corref{} \ead[label=e1]{bender@math.uni-sb.de}}
\thankstext{t1}{This is a preprint version of an article 
published in the Annals of Probability, 2020, Vol. 48, 458--492.
The author thanks the associate editor and the referee for their valuable comments, which led to significant improvements of the paper.} 
\address{Department of Mathematics\\ Saarland University\\ PO Box 151150\\ 66041 Saarbr\"ucken\\Germany\\ \printead{e1}}
\affiliation{Saarland University }

\runauthor{C. Bender }

\begin{abstract}
We introduce a Skorokhod type integral and prove an It\^o formula for a wide class of Gaussian processes which may exhibit stochastic discontinuities. Our It\^o formula unifies and  extends
the classical one for general (i.e., possibly discontinuous) Gaussian martingales in the sense of It\^o integration and the one for stochastically continuous Gaussian non-martingales in the Skorokhod sense, which was first 
derived in Al\`os et al. (Ann.  Probab. 29, 2001). 
\end{abstract}

\begin{keyword}[class=MSC]
\kwd{60H07, 60H05, 60G15}
\end{keyword}

\begin{keyword}
\kwd{Gaussian processes, It\^o's formula, stochastic discontinuities, stochastic integrals, S-transform.}
\end{keyword}

\end{frontmatter}

\section{Introduction}

Since the pioneering work of \citet{AMN}, It\^o's formula for Gaussian processes in the sense of Skorokhod type integration has been developed in a series of papers. The generic formula reads as follows: If $X$ is a
centered Gaussian 
process with  variance function $V$, which is assumed to be of bounded variation and continuous, and $F$ is sufficiently smooth, then
\begin{equation}\label{eq:Ito_cts}
 F(X_T)=F(X_0)+\int_0^T F'(X_s) d^\diamond X_s + \frac{1}{2} \int_0^T F''(X_s) dV(s).
\end{equation}
This formula has been shown to be valid under structural assumptions on a kernel representation of $X$ with respect to a Brownian motion \citep{AMN, MV, Leb},  on the covariance 
function \citep{KRV, LN, HJT, AK},  and on the quadratic variation of $X$ \citep{NT, NT2}. 

An important contribution by \citet{MV} clarifies that  the It\^o formula \eqref{eq:Ito_cts} can even hold for certain classes of Gaussian processes, whose paths feature discontinuities of the second type. The Gaussian processes 
in \cite{MV} and in all the other references mentioned above are, however, stochastically continuous. In view of Theorem 1.1 in \citet{Sam} this property rules out the possibilty that the paths of $X$ have discontinuities of the first kind (i.e., jumps) only. 
In this paper, we derive, for the first time, the general form of the Gaussian It\^o formula in the presence of stochastic discontinuities. Assuming, for the sake of exposition, that $X$ is stochastically right-continuous with left limits, 
our It\^o formula takes the following form:
\begin{eqnarray}\label{eq:Ito_rcll}
 F(X_T)&=&F(X_0)+\int_{0+}^T F'(X_{s-}) d^\diamond X_s + \frac{1}{2} \int_{0+}^T F''(X_{s-}) dV^c(s) \nonumber \\
&& + \sum_{s\in D_X \cap (0,T]} \Bigl( F(X_s)- F(X_{s-})-F'(X_{s-}) (X_s-X_{s-})  \\ &&  \quad\quad\quad\quad\quad\quad  + F''(X_{s-})E[(X_s-X_{s-})X_{s-}]\Bigr). \nonumber 
\end{eqnarray}
Here, $V^c$ denotes the continuous part of the variance function and the sum runs over the set $D_X$ of deterministic time points in $(0,T]$, at which $X$ exhibits a stochastic discontinuity. This set of stochastic discontinuities can be determined by the covariance function of $X$.

 We wish to emphasize the following features:
\begin{itemize}
\item[i)] Our key assumption is a regularity assumption of the elements in the Cameron-Martin space of $X$ in terms of their quadratic variation. The crucial role, which regularity in the Cameron-Martin space plays for deriving the Gaussian It\^o
formula, has apparently been largely unnoticed in the literature. On the one hand, it opens the door to extend the Gaussian It\^o formula to processes with jumps. On the other hand, it can easily be checked in all references (which we are aware of),
in which the It\^o formula \eqref{eq:Ito_cts} is proved for stochastically continuous processes. Hence, our approach has a unifying and generalizing character at the same time.
\item[ii)] The second derivative term in the jump component of our It\^o formula depends on the covariance structure of the Gaussian process $X$ via the expression $E[X_s X_{s-}]$. This is in contrast to the stochastically continuous case, in which only 
the variance function appears in the corresponding formula \eqref{eq:Ito_cts}. If $X$ is a Gaussian martingale with jumps, then this extra second derivative term clearly vanishes, and the It\^o formula \eqref{eq:Ito_rcll} reduces to its classiscal form:
\begin{eqnarray}\label{eq:Ito_mart}
 F(X_T)&=&F(X_0)+\int_{0+}^T F'(X_{s-}) dX_s + \frac{1}{2} \int_{0+}^T F''(X_{s-}) d [X]^c_s  \nonumber \\ &&+ \sum_{s\in  (0,T]} F(X_s)- F(X_{s-})-F'(X_{s-}) (X_s-X_{s-}),
\end{eqnarray}
(where we exploit that the continuous part $V^c$ of the variance coincides with the continuous part of the quadratic variation $[X]$ in the Gaussian case and
that, by \cite{JM},  all the jumps of a Gaussian martingale occur at the deterministic times of stochastic discontinuities).
\item[(iii)] The Skorokhod type integration, which we develop in this paper, and It\^o's formula can be extended to conditionally Gaussian processes. We explain this construction in detail for fractional Brownian motion subordinated 
by an increasing L\'evy process. This class of processes shares many important properties of fractional Brownian motion like stationary increments and short/long memory (in dependence of the Hurst parameter and the L\'evy subordinator). Additionally, 
subordinated fractional Brownian motion features jumps and heavier tails than Gaussian, which makes it a promising model for real-world phenomena, see e.g. \citet{Gal,Mal,DHJ}. To the best of our knowledge, 
 this paper constitutes the first approach 
to a Skorokhod type stochastic calculus for subordinated fractional Brownian motion (or, more generally, conditionally Gaussian processes with jumps).   
\end{itemize}

The paper is organized as follows.  In Section \ref{sec_regulated}, we first recall some preliminaries on Gaussian processes and 
then introduce  the concept of a weakly 
regulated process. It allows giving a meaning to the one-sided limits $X_{s\pm}$ without imposing any path regularity assumption on $X$, provided that the 
functions in the Cameron-Martin space 
of $X$ are regulated.  These one-sided limits appear in our general version of It\^o's formula in Theorem \ref{thm:Ito}.
 In Section \ref{sec_regulated}, we  also study the set of stochastic discontinuities for weakly regulated Gaussian processes. In Section \ref{sec_Malliavin}, we recall the definition of the Malliavin derivative 
and of the divergence operator, with an emphasis on the role of the $S$-transform and of regularity in the Cameron-Martin space. The observations in Section \ref{sec_Malliavin} serve 
as a main motivation for our notion of Wick-Skorokhod integration, which we 
define in Section \ref{sec_integral}  in terms of the $S$-transform and the Henstock-Kurzweil integral. This $S$-transform approach has already been 
adopted in \cite{BeBernoulli, Be2014}, and actually, can be viewed as a main tool 
for studying Hitsuda-Skorokhod integration in a white noise framework as in \cite{Leb} and the references therein. We also show, that our integrals extends the classical stochastic It\^o integral 
for predictable integrands to anticipating integrands in the case that $X$ is a Gaussian martingale. 
   After these preparations we can state 
and prove our It\^o formula in its general form in Section \ref{sec_ito}. Section \ref{sec:fBm} is devoted to subordinated fractional Brownian motion. We finally explain how to check the required structural assumption on the Cameron-Martin space in Section \ref{sec_reg}
 and compare 
our assumptions to the ones imposed in the existing literature in Section \ref{sec_comparison}.  In the appendix, we provide,
following ideas of \cite{No}, a chain rule for the Henstock-Kurzweil 
integral, which is required in our proof of It\^o's formula.

\section{Weakly regulated processes}\label{sec_regulated}

The main purpose of this section is to give a meaning to the one-sided limits $X_{s\pm}$, which occur in the It\^o formula (Theorem \ref{thm:Ito}), without imposing path regularity assumptions on $X$.
Before doing so, let us recall some facts on Gaussian processes for ready reference.

Suppose $(X_t)_{0\leq t \leq T}$ is a centered Gaussian process on a  complete probability space $(\Omega,\F,P)$ with variance function $V(t):=E[X(t)^2]$. We denote by $H_X$ the first chaos of $X$, i.e. the Gaussian 
Hilbert space, which is obtained by taking the closure of the linear span of
$
\{X_t;\; t\in[0,T]\}
$
in $L^2(\Omega,\F,P)$. To each element $h\in H_X$, one can associate a function
$$
\inth: [0,T] \rightarrow \R,\quad t \mapsto E[X_th].
$$
The space of functions
$$
CM_X:=\{\inth;\; h\in H_X\}
$$
is called the \emph{Cameron-Martin space} associated to $X$. As, by definition, the set $
\{X_t;\; t\in[0,T]\}
$ is total in $H_X$, the map 
$$
H_X\rightarrow CM_X,\quad h\mapsto \inth
$$
is bijective. It becomes an isometry, if one equips the Cameron-Martin space with the inner product
$$
\langle \inth,\intg\rangle_{CM_X}:=E[h g].
$$
The \emph{Wick exponential} of $h\in H_X$ is defined to be 
$$
\exp^\diamond(h):=\exp\{h-E[h^2]/2\}.
$$ 
If $\mathcal{A}$ is a dense subset of $H_X$, then,  by Corollary 3.40 in \cite{Ja}, the set 
$$
\{\exp^\diamond(h);\; h\in \mathcal{A} \} 
$$
is total in $(L^2_X):=L^2(\Omega,\F^X,P)$, where $\F^X$ is the completion by the $P$-null sets of the $\sigma$-field generated by $X$. 
Hence, every random variable $\xi \in (L^2_X)$ is uniquely determined by its \emph{$S$-transform} restricted to $\mathcal{A}$,
$$
(S\xi)(h):=E[\exp^\diamond(h) \xi],\quad h\in \mathcal{A}.
$$
This means, the identity $\xi=\eta$ is valid in $(L^2_X)$, if and only if   $(S\xi)(h)=(S\eta)(h)$ for every $h\in \mathcal{A}$.
We also recall the following straightforward identities ($g,h\in H_X,\; t\in [0,T]$)
\begin{eqnarray}\label{eq:Stransform1}
 \ \quad \exp^\diamond(h)\exp^\diamond(g)&=&e^{E[gh]}\exp^\diamond(g+h),\quad (S \exp^\diamond(g))(h)=e^{E[gh]},\\ 
 (Sg)(h)&=&E[gh],\quad (SX_t)(h)=\inth(t). \nonumber 
\end{eqnarray}
More, generally, we note that, by a classical result on Gaussian change of measure,
\begin{equation}\label{eq:shift}
 (S G(g_1,\ldots,g_D))(h)= E[G(g_1 +E[g_1h],\ldots, g_D +E[g_Dh])]
\end{equation}
for every $g_1,\ldots,g_D,h\in H_X$ and measurable $G:\R^D\rightarrow \R$, provided that the expectation on the right-hand side exists, see e.g. \citet[Theorem 14.1]{Ja}.

After these preliminaries, we now define the notion of a weakly regulated process.
\begin{defi}
 A stochastic process $Y:[0,T]\rightarrow (L^2_X)$ is called \emph{weakly regulated}, if for every $s\in (0,T]$ there is a random variable 
$Y_{s-}\in (L^2_X)$ such that for every sequence $(s_n)$ which converges to $s$ from the left 
$$
\lim_{n\rightarrow \infty} Y_{s_n}=Y_{s-} \textnormal{ weakly in } (L^2_X),
$$
and, if 
for every $s\in [0,T)$ there is a random variable 
$Y_{s+}\in (L^2_X)$ such that for every sequence $(s_n)$ which converges to $s$ from the right 
$$
\lim_{n\rightarrow \infty} Y_{s_n}=Y_{s+} \textnormal{ weakly in } (L^2_X),
$$
\end{defi}
For a weakly regulated process, we shall apply the convention $Y_{0-}:=Y_0$ and $Y_{T+}:=Y_T$. Moreover, we write
$$
\Delta Y_s=Y_{s+}-Y_{s-},\quad \Delta^+ Y_s=Y_{s+}-Y_{s},\quad \Delta^-Y_s=Y_s-Y_{s-}.
$$
The analogous notation is used for the jumps of deterministic regulated functions.

Weakly regulated processes can be characterized via the $S$-transform as follows:
\begin{prop}\label{prop:weakly_regulated}
 Suppose $Y:[0,T]\rightarrow (L^2_X)$. If $Y$ is weakly regulated, then the map $t\mapsto E[Y_t^2]$ is bounded and, for every $h\in H_X$, the map 
$t\mapsto (SY_t)(h)$ is regulated (i.e., has limits from the left and from the right).

Conversely, assume that $\mathcal{A}$ is a dense subset of $H_X$. If the map $t\mapsto E[Y_t^2]$ is bounded and, for every $h\in \mathcal{A}$, the map 
$t\mapsto (SY_t)(h)$ is regulated, then $Y$ is weakly regulated.
\end{prop}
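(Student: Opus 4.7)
The plan is to exploit the identity $(SY_s)(h) = E[\exp^\diamond(h)Y_s]$, which realises the $S$-transform as the pairing of $Y_s$ against the Wick exponential $\exp^\diamond(h) \in (L^2_X)$. Under this lens, weak convergence in $(L^2_X)$ translates directly into pointwise convergence of $S$-transforms on $H_X$, and the converse reduction proceeds via Hilbert-space weak compactness combined with a totality argument for $\{\exp^\diamond(h): h\in \mathcal{A}\}$.

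For the necessity direction, I would fix $t$ and $h \in H_X$ and observe that weak convergence $Y_{s_n} \to Y_{t-}$ tested against $\exp^\diamond(h)$ yields $(SY_{s_n})(h) \to (SY_{t-})(h)$, which gives existence of the left limit of $(SY_\cdot)(h)$ at $t$; right limits are identical. For boundedness of $E[Y_t^2]$, I would argue by contradiction: if $E[Y_{t_n}^2] \to \infty$ along some sequence $t_n$, extract a subsequence converging strictly from one side to some $t^* \in [0,T]$ (the case $t_n = t^\ast$ eventually is ruled out since $Y_{t^\ast} \in (L^2_X)$); weak convergence of the subsequence to $Y_{t^*\pm}$ then forces norm-boundedness by the uniform boundedness principle, contradicting the blow-up.

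For the converse, fix $t$ and an arbitrary sequence $s_n \to t^-$. Boundedness of $\sup_s E[Y_s^2]$ makes $\{Y_{s_n}\}$ a bounded sequence in the Hilbert space $(L^2_X)$, so weak sequential compactness produces a weakly convergent subsequence with limit $Y^\ast$. For every $h \in \mathcal{A}$, the hypothesis gives $E[\exp^\diamond(h)Y^\ast] = \lim_{s\to t^-}(SY_s)(h)$, a quantity that depends only on $t$ and $h$; the totality of $\{\exp^\diamond(h):h\in\mathcal{A}\}$ in $(L^2_X)$ (Corollary 3.40 of \cite{Ja}, already invoked in the excerpt) then pins $Y^\ast$ down uniquely, and we christen it $Y_{t-}$. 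A standard subsequence-of-subsequence argument upgrades the fact that \emph{every} weakly convergent subsequence has the same limit into weak convergence of the full sequence $Y_{s_n} \to Y_{t-}$. Right limits are handled symmetrically. The step that requires the most care is ensuring that $Y_{t-}$ is genuinely independent of the approximating sequence $s_n$, which is exactly why the hypothesis asks for the full one-sided limit of $(SY_s)(h)$ and not merely existence of subsequential cluster values.
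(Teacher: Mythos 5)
Your proposal is correct and follows essentially the same route as the paper: testing weak convergence against Wick exponentials for the necessity part, a uniform-boundedness contradiction for the norm bound, and Banach--Alaoglu plus totality of $\{\exp^\diamond(h);\,h\in\mathcal{A}\}$ for the converse. The only cosmetic difference is that the paper invokes Theorem V.1.3 of Yosida to upgrade convergence on a total set to weak convergence of the full sequence, whereas you run the equivalent subsequence-of-subsequences argument by hand.
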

Before we prove this proposition, let us state the following corollary concerning the Gaussian process $X$.
\begin{cor}\label{cor:weakly_regulated}
 The Gaussian process $(X_t)_{t\in [0,T]}$ is weakly regulated, if and only if its variance function $V$ is bounded and the Cameron-Martin space $CM_X$ has a dense subset consisting 
of regulated functions. In this case the weak $(L^2_X)$-limits $X_{s\pm}$ belong to the first chaos $H_X$.
\end{cor}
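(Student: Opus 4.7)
The plan is to deduce the corollary directly from Proposition \ref{prop:weakly_regulated} applied to $Y=X$, using the two key identifications $E[X_t^2]=V(t)$ and $(SX_t)(h)=\inth(t)$ from \eqref{eq:Stransform1}, together with the isometry $H_X\to CM_X$, $h\mapsto \inth$.

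For the ``only if'' direction, assume that $X$ is weakly regulated. Proposition \ref{prop:weakly_regulated} yields that $t\mapsto E[X_t^2]=V(t)$ is bounded, and that for every $h\in H_X$ the function $t\mapsto (SX_t)(h)=\inth(t)$ is regulated. In particular every element of $CM_X$ is regulated, so trivially $CM_X$ itself is a dense regulated subset. For the ``if'' direction, suppose that $V$ is bounded and that $\mathcal{D}\subset CM_X$ is a dense subset of regulated functions. Setting $\mathcal{A}:=\{h\in H_X:\inth\in \mathcal{D}\}$, the isometry $h\mapsto\inth$ implies that $\mathcal{A}$ is dense in $H_X$, and for $h\in\mathcal{A}$ the map $t\mapsto (SX_t)(h)=\inth(t)$ is regulated by construction. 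Since $t\mapsto E[X_t^2]=V(t)$ is bounded, the converse part of Proposition \ref{prop:weakly_regulated} shows that $X$ is weakly regulated.

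It remains to check that the one-sided limits $X_{s\pm}$ belong to $H_X$. This is immediate from the definition of weak regulatedness: $X_{s-}$ is a weak $(L^2_X)$-limit of the net $(X_{s_n})$ for any sequence $s_n\uparrow s$, and each $X_{s_n}$ lies in the first chaos $H_X$, which is a closed linear subspace of $(L^2_X)$ and hence weakly closed; the same reasoning applies to $X_{s+}$. I do not anticipate any real obstacle here, as the corollary is essentially a translation of Proposition \ref{prop:weakly_regulated} through the canonical isometry between $H_X$ and $CM_X$; the only point requiring a moment of care is recognising that the assumption ``$CM_X$ has a dense subset of regulated functions'' corresponds, under this isometry, to the choice of the dense set $\mathcal{A}$ in the hypothesis of Proposition \ref{prop:weakly_regulated}.
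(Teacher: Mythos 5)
Your proof is correct and follows essentially the same route as the paper: the first statement is read off from Proposition \ref{prop:weakly_regulated} via the identities $E[X_t^2]=V(t)$ and $(SX_t)(h)=\inth(t)$ together with the isometry $H_X\to CM_X$, and the membership $X_{s\pm}\in H_X$ follows because the norm-closed subspace $H_X$ is weakly closed. The paper merely spells out that last point concretely, by pairing $X_{s\pm}$ against $X_{s\pm}-\pi_{H_X}(X_{s\pm})$ and passing to the limit along $X_{s\pm 1/n}\in H_X$, which is the same underlying computation you invoke abstractly.
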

\begin{proof}
 In view of Proposition \ref{prop:weakly_regulated}, the first statement is an immediate consequence of \eqref{eq:Stransform1}. For the second one, let $s\in[0,T)$ and denote by 
$\pi_{H_X}$ the orthogonal projection from $(L^2_X)$ on $H_X$. Then,
\begin{eqnarray*}
 E[|X_{s+}-\pi_{H_X}(X_{s+})|^2]&=& E[X_{s+}(X_{s+}-\pi_{H_X}(X_{s+}))]\\ &=&\lim_{n\rightarrow \infty}  E[X_{s+\frac{1}{n}}(X_{s+}-\pi_{H_X}(X_{s+}))].
\end{eqnarray*}
The right-hand side equals 0, because $X_{s+1/n}\in H_X$. Hence, $X_{s+}\in H_X$, and the left-sided limits can be handled analogously.
\end{proof}

\begin{proof}[Proof of Proposition \ref{prop:weakly_regulated}]
 Suppose first that $Y$ is weakly regulated. Fix $h\in H_X$ and $s\in (0,T]$. Then, for every  sequence $(s_n)$ which converges to $s$ from the left,
$$
\lim_{n\rightarrow \infty} (SY_{s_n})(h)=\lim_{n\rightarrow \infty} E[Y_{s_n} \exp^\diamond(h)] = E[Y_{s-} \exp^\diamond(h)]
$$
Hence, $t\mapsto (SY_t)(h)$ has a limit from the left at $s$. In the same way, one observes that it has a limit from the right at every $s\in [0,T)$. We next prove the boundedness of $t\mapsto E[Y_t^2]$
by contradiction. Thus, suppose to the contrary that this function is unbounded, and choose a sequence $(t_n)$ in $[0,T]$ such that  $E[Y_{t_n}^2]$ converges to infinity. We can extract a subsequence
$(t_{n_k})$ which converges to some $t\in [0,T]$ and satisfies (by passing to another subsequence, if necessary) $t_{n_k}< t$ or $t_{n_k}>t$ for every $k\in \N$. Then,
 the sequence $(X_{t_{n_k}})$ converges weakly in $(L^2_X)$ to $X_{t-}$ (in the first case) or $X_{t+}$ (in the second case). Hence, by Theorem V.1.1 in \cite{Yosida}, the sequence $(E[Y_{t_{n_k}}^2])_{k\in \N}$ is bounded, a contradiction.

For the converse implication, fix $s\in [0,T)$ and a sequence $(s_n)$, which converges to $s$ from the right. As 
$$
\sup_{n\in \N} E[|Y_{s_n}|^2]\leq \sup_{t\in [0,T]} E[|Y_{t}|^2]<\infty,
$$
there is, by the Banach-Alaoglu theorem \citep[][Theorem V.2.1]{Yosida}, a subsequence $(s_{n_k})$ such that $(Y_{s_{n_k}})$ converges weakly in $(L^2_X)$ to a limit, 
which we denote by $Y_{s+}$. Define, for every $h\in \mathcal{A}$, $\tilde h: [0,T]\rightarrow \R,\; t \mapsto (SY_t)(h)$. As, for every $h \in \mathcal{A}$, $\tilde h$ is 
regulated, we get
$$
\tilde h(s+)=\lim_{k\rightarrow \infty} \tilde h(s_{n_k})=\lim_{k\rightarrow \infty} E[Y_{s_{n_k}} \exp^\diamond(h)]=E[Y_{s+}\exp^\diamond(h)],\quad h\in A.
$$
Now, for every sequence $(t_n)$ converging to $s$ from the right and every $h \in \mathcal{A}$,
$$
E[Y_{t_{n}} \exp^\diamond(h)]=\tilde h(t_n) \rightarrow \tilde h(s+)=E[Y_{s+} \exp^\diamond(h)].
$$
As, moreover, $\sup_{n\in \N} E[|Y_{t_n}|^2]<\infty$, we conclude thanks to Theorem V.1.3 in \cite{Yosida} that
$$
Y_{t_n} \rightarrow Y_{s+}  \quad  \textnormal{ weakly in }(L^2_X).
$$
An analogous argument shows,  for every $s\in (0,T]$, existence of a weak limit $X_{s-}$ in $(L^2_X)$  (as $u$ approaches $s$ from the left).
\end{proof}

The following example is instructive and will be applied in the proof of It\^o's formula.
\begin{exmp}\label{ex:F(X)}
 Suppose that the variance function $V$ of $X$ is regulated and that the Cameron-Martin space $CM_X$ has a dense subset consisting 
of regulated functions. Then, by Corollary \ref{cor:weakly_regulated}, $X$ is weakly regulated. We consider, for some continuous function 
$F:\R\rightarrow\R$, the process $Y_t:=F(X_t)$. Assume that $F$ satisfies the following subexponential growth condition 
\begin{equation}\label{eq:growth}
|F(x)|\leq C e^{ax^2},\quad x\in \mathbb{R},
\end{equation}
for constants $C\geq 0$ and $0\leq a < (4\lambda)^{-1}$ with $\lambda:=\sup_{t\in [0,T]} V(t)$. This standard assumption guarantees that 
$t\mapsto E[Y_t^2]$ is bounded. By \eqref{eq:shift}, the $S$-transform of $Y_t$ is given by
\begin{equation}
 (SF(X_t))(h)= E[F(X_t+\inth(t))]=\psi_F(V(t),\inth(t)),\quad h\in H_X \label{eq:SF(X)},
\end{equation}
where
\begin{equation}\label{eq:psi_F}
\psi_F:[0,\lambda]\times \R \rightarrow \R,\quad (t,x)\mapsto \int_\R F(x+\sqrt{t}y )\frac{1}{\sqrt{2\pi}} e^{-y^2/2}dy.
\end{equation}
Note that $\psi_F(0,x)=F(x)$.

By  \eqref{eq:SF(X)} and Proposition \ref{prop:weakly_regulated}, $Y$ is weakly regulated and the weak limits $Y_{t+}$, $Y_{t-}$ satisfy
$$
(S Y_{t_\pm})(h)= \psi_F(V(t\pm),\inth(t\pm)),\quad h\in H_X.
$$
Define $V^\pm(t)=E[X_{t\pm}^2]$. By Theorem V.1.1 in \cite{Yosida}, $V^\pm(t)\leq V(t\pm)$, and the inequality can be strict, as $X_{t\pm}$ are weak $(L^2_X)$-limits only.
Then,
\begin{equation}
Y_{t+}=\psi_F(V(t+)-V^+(t),X_{t+}),\quad Y_{t-}=\psi_F(V(t-)-V^-(t),X_{t-}),
\end{equation}
since, e.g., for every $h\in H_X$
\begin{eqnarray*}
 && S\left(\psi_F(V(t+)-V^+(t),X_{t+}) \right)(h)\\&=&\int_{\R^2} F(\inth(t+)+\sqrt{V(t+)-V^+(t)}u +\sqrt{V^+(t)}y) \frac{1}{2\pi}e^{-(u^2+y^2)/2} d(u,y)\\ &=&\psi_F(V(t+),\inth(t+)).
\end{eqnarray*}
Thus, the identity $Y_{t+}=F(X_{t+})$ can, in general, only be expected to be valid at those $t\in [0,T)$, for which $V(t+)=V^+(t)$ (which is equivalent to saying that the convergence  from the right to $X_{t+}$ takes place in probability).
\end{exmp}

We next study the set of stochastic discontinuities of $X$.
\begin{defi}
 The process  $X$ is said to be \emph{stochastically continuous} at $t\in [0,T]$, if for every sequence $(t_n)$ in $[0,T]$
$$
t_n \rightarrow t\; \Rightarrow  \; X_{t_n} \rightarrow X_t \textnormal{ in probability}.
$$
We denote by $C_X$ the set of points, at which $X$ is stochastically continuous, and by $D_X=[0,T]\setminus C_X$ the set of stochastic discontinuities of $X$.
\end{defi}

\begin{prop}\label{prop:disc}
 Suppose that the variance function $V$ of $X$ is regulated, $X$ is weakly regulated and $H_X$ is separable. Then the set $D_X$ of stochastic discontinuities of $X$ is at most countable.
\end{prop}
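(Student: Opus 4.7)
The plan is to exploit the separability of $H_X$ together with the regularity of $V$ and of the Cameron--Martin functions in order to carve out a countable exceptional set outside of which $X$ is stochastically continuous. Since $V$ is regulated on the compact interval $[0,T]$, it is bounded; set $\lambda:=\sup_{t\in [0,T]}V(t)$, and let $D_V$ denote the (at most countable) set of discontinuities of $V$. Applying Proposition~\ref{prop:weakly_regulated} to $Y=X$, and using $(SX_t)(h)=\inth(t)$ from \eqref{eq:Stransform1}, one sees that every function $\inth$ with $h\in H_X$ is regulated and thus has at most countably many discontinuities on $[0,T]$. Fix a countable dense subset $\{h_k\}_{k\in\N}$ of $H_X$, denote by $D_k$ the discontinuity set of $\inth_k$, and set $D:=D_V\cup \bigcup_{k\in \N} D_k$, which is still at most countable.

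I claim that $D_X\subseteq D$. Fix $t\notin D$ and an arbitrary sequence $t_n\to t$. Since $X_{t_n}-X_t$ is Gaussian, convergence in probability is equivalent to $L^2$-convergence, so it suffices to show
\[ E\left[(X_{t_n}-X_t)^2\right]=V(t_n)+V(t)-2E[X_{t_n}X_t]\longrightarrow 0. \]
Because $t\notin D_V$, the first two terms tend to $2V(t)$. For the cross term, set $g:=X_t\in H_X$, so that $E[X_{t_n}X_t]=\intg(t_n)$. By Cauchy--Schwarz and $V\leq\lambda$,
\[ \sup_{s\in[0,T]}\left|\intg(s)-\inth_k(s)\right|\leq \sqrt{\lambda}\,\sqrt{E[(g-h_k)^2]}, \]
so by density of $\{h_k\}$ the right-hand side can be made arbitrarily small by a suitable choice of $k$. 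A standard $3\varepsilon$-argument, using for each such $k$ that $t\notin D_k$ renders $\inth_k$ continuous at $t$, then shows that $\intg$ is continuous at $t$. Hence $E[X_{t_n}X_t]=\intg(t_n)\to \intg(t)=V(t)$, and the displayed expression tends to $0$.

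The key step is the density upgrade in the previous paragraph: promoting continuity at $t$ of the countably many $\inth_k$'s to continuity at $t$ of $\intg$ for \emph{every} $g\in H_X$. This relies crucially on the boundedness of $V$, which converts $H_X$-approximation into uniform approximation of the associated Cameron--Martin functions on $[0,T]$, and on the separability of $H_X$ in order to assemble a single countable exceptional set $D$ that controls all of $H_X$ simultaneously.
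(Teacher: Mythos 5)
Your proof is correct and follows essentially the same route as the paper: the same countable exceptional set $D$ built from the discontinuities of $V$ and of the $\inth_k$ for a countable dense family, with stochastic continuity off $D$ deduced from convergence of the norms and of the inner products. The only cosmetic difference is that where the paper cites Theorem V.1.3 of \cite{Yosida} to pass from weak convergence plus norm convergence to strong $L^2$-convergence, you unpack that Hilbert-space fact by hand via the expansion of $E[(X_{t_n}-X_t)^2]$ and a $3\varepsilon$-argument for the cross term.
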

\begin{proof}
 We first apply the separability of $H_X$ and choose a countable dense subset $\mathcal{A}'$ of $H_X$. Let
\begin{eqnarray*}
D&=&\{s\in[0,T];\; V \textnormal{ is discontinuous at }s\} \\ &&\cup \, \left(\bigcup_{h \in \mathcal{A}'} \{s\in[0,T];\; \inth \textnormal{ is discontinuous at }s\}\right).
\end{eqnarray*}
As $V$ and all the $\inth$'s are regulated functions thanks to Proposition \ref{prop:weakly_regulated}, the set $D$ is at most countable. If $t\in [0,T]\setminus D$, 
then we obtain, for every $h\in \mathcal{A}'$
\begin{eqnarray*}
 \lim_{s\rightarrow t} E[X_s h]&=&\lim_{s\rightarrow t} \inth(s)=\inth(t)= E[X_t h] \\
\lim_{s\rightarrow t} E[|X_s|^2]&=&\lim_{s\rightarrow t} V(s)=V(t)= E[|X_t|^2].
\end{eqnarray*}
Now, Theorem V.1.3 in \cite{Yosida} implies that
$$
\lim_{s \rightarrow t}  E[|X_s-X_t|^2]=0,
$$
and, hence, $X$ is stochastically continuous at $t$. In particular, $D_X\subset D$ is at most countable.
\end{proof}
\begin{rem}
 Note that the separability of $H_X$ cannot be dispensed with. Indeed, if $D_X$ is countable, then the rational span of 
$$
\{X_t;\; t\in ([0,T]\cap \mathbb{Q})\cup D_X \}
$$
is a countable dense subset of $H_X$.
\end{rem}

\section{Divergence operator and Skorokhod integral}\label{sec_Malliavin}

In this section, we briefly recall the definition of the Malliavin derivative and the divergence operator, and discuss the notion of  Skorokhod integration.
Our presentation differs from the usual account in the literature, in that we dwell upon the role of the $S$-transform and on regularity properties in the Cameron-Martin space.

Let us first re-interpret the first chaos of $X$ as an isonormal Gaussian process in the sense of Definition 1.1.1 in \cite{Nualart}, parametrized by the Cameron-Martin space of $X$. Then, the \emph{Malliavin derivative} $D$
is a densely defined closed operator from $(L^2_X)$ to $L^2_X(CM_X):=L^2(\Omega, \F^X,P; CM_X)$, which is constructed as follows: 

For a \emph{smooth random variable} $F=f(h_1,\ldots, h_K)$, where $h_1,\ldots h_K \in H_X$
and $F\in \mathcal{C}_b^1(\R^K,\R)$, one defines
$$
DF=\sum_{k=1}^K \frac{\partial F}{\partial x_k} (h_1,\ldots h_K) \, \underline{h}_k.
$$
The Malliavin derivative for smooth random variables is closable from $(L^2_X)$ to $L^2_X(CM_X)$ by Proposition 1.2.1 in \cite{Nualart}. Its domain is usually denoted by $\mathbb{D}_X^{1,2}$. The 
adjoint operator is known as the \emph{divergence operator}. Note that the linear span of the Wick exponentials is dense in $\mathbb{D}_X^{1,2}$, see Theorem 15.110 in \cite{Ja}, and that
$$
D\,\exp^\diamond(h)= \exp^\diamond(h)\,\inth,\quad h\in H_X.
$$
Hence,
$u\in L^2_X(CM_X)$
belongs to the domain $Dom(\delta)$ of the divergence operator $\delta$, if and only if there is a random variable $\delta(u)\in (L^2_X)$ such that, for every $h\in H_X$,
$$
 E[\langle D\exp^{\diamond}(h), u\rangle_{CM_X}]= E[\exp^{\diamond}(h)\delta(u)],
$$
or, equivalently, in terms of the $S$-transform,
\begin{equation*}
 S\left( \langle  u, \inth \rangle_{CM_X}\right)(h)= S(\delta(u))(h),
\end{equation*}
for every $h\in H_X$ (or, by Corollary 3.40 in \cite{Ja}, from a dense subset of $H_X$). The divergence operator $\delta$, defined as above, then is a densely defined closed linear operator from  $L^2_X(CM_X)$ to $(L^2_X)$.
 As the Cameron-Martin space is a function space, our definition of the divergence operator maps stochastic processes to random variables, but it does not yet qualify as an integral operator with respect to $X$. 
Indeed, the minimal property, which such an integral should satisfy is that it maps indicator functions on increments, namely
\begin{equation*}
 {\bf 1}_{(0,t]}\rightarrow X_{t+}-X_{0+},\quad t\in [0,T].
\end{equation*}
However,  in terms of the covariance function $R(t,s):=E[X_t \,X_s]$, we get
$$
\delta(R(t+,\cdot))=X_{t+},
$$
because 
\begin{eqnarray*}
 S\left( \langle  R(t+,\cdot), \inth \rangle_{CM_X}\right)(h)&=& \langle  R(t+,\cdot), \inth \rangle_{CM_X}= E[X_{t+} h]=\inth(t+)\\ &=& (S\,X_{t+})(h).
\end{eqnarray*}
Hence, one can define an integral operator in the following way: Suppose $\mathcal{R}$ is a Banach space of (equivalence classes of) functions from $[0,T]$ to $\R$, such that the indicator functions 
${\bf 1}_{(0,t]}$, $t\in [0,T]$, constitute a total subset of $E$. Suppose $\iota: \mathcal{R} \rightarrow CM_X$ is a continuous linear map such that 
\begin{equation}\label{eq:embedding}
\iota({\bf 1}_{(0,t]})=R(t+,\cdot)-R(0+,\cdot).
\end{equation}
In case of existence, the map $\iota$ is uniquely determined and extends in a canonical way to a continuous linear mapping from the Bochner space $L^2_X(\mathcal{R}):=L^2(\Omega, \F^X,P; \mathcal{R})$ to $L^2_X(CM_X)$.
We may then say that $Z\in L^2_X(\mathcal{R})$ belongs to the domain of the \emph{Skorokhod integral}, if $\iota(Z)\in Dom(\delta)$, and define $\delta_\mathcal{R}(Z):=\delta\circ\iota(Z)$. If we introduce a continuous bilinear map 
on $\mathcal{R}\times CM_X$ via 
$$
\langle r, \inth \rangle_{\mathcal{R},CM_X}:=\langle \iota(r), \inth \rangle_{CM_X},\quad (r,\inth)\in \mathcal{R}\times CM_X,
$$
then we obtain the following $S$-transform characterization of the Skorokhod integral:  $Z\in L^2_X(\mathcal{R})$ belongs to the domain of $\delta_\mathcal{R}$, if and only if there is a $\delta_\mathcal{R}(Z)\in (L^2_X)$ such that for every $h$ from a dense subset of $H_X$,
$$
S\left( \langle  Z, \inth \rangle_{\mathcal{R},CM_X}\right)(h)= S(\delta_\mathcal{R}(Z))(h).
$$ 
Hence, it is crucial to understand the bilinear map $\langle r, \inth \rangle_{\mathcal{R},CM_X}$. Here are some examples.
\begin{exmp}\label{exmp:Skorokhod}
 (i) Suppose that $X$ is an Gaussian martingale with RCLL paths. Then, the variance function $V$ is RCLL and nondecreasing and 
$$
CM_X=\left\{\inth=\int_0^\cdot \dot \inth(s) dV(s);\quad \dot \inth \in L^2([0,T],dV)\right\}, \; \|\inth\|_{CM_X}=\|\dot \inth\|_{L^2(dV)}.
$$
We can now choose $\mathcal{R}=L^2([0,T], dV)$ and $\iota:L^2([0,T], dV)\rightarrow CM_X, \; \dot \inth \mapsto \int_0^\cdot  \dot \inth(s) dV(s)$, which is an isometric isomorphism. Consequently,
$$
\langle r, \inth \rangle_{\mathcal{R},CM_X}=\int_0^T r(s) \dot\inth(s) dV(s) =\int_0^T r(s) d\inth(s).
$$
The operator $\delta_{L^2(dV)}$ then coincides with the classical notion of the Skorokhod 
integral on general measure spaces, see e.g. \citet{Ja}, Chapters 7.2, 15.11, and 16.4.  \\[0.1cm] 
(ii)   Recall that fractional Brownian 
motion is a centered Gaussian process $X$ which has, in dependence of the Hurst parameter $H\in (0,1)$, the covariance function
$$
R(t,s)=\frac{1}{2}\left(t^{2H}+s^{2H}-|t-s|^{2H} \right).
$$
If $H=1/2$, then $X$ is a Brownian motion and we are back in item (i). We now consider the case $H>1/2$. Then every element $\inth$ of the Cameron-Martin space is absolutely continuous with respect to the Lebesgue measure with a density 
$\dot \inth \in L^{1/(1-H)}([0,T],dt)$, see \citet{DU}.
There is, however, no known Hilbert space $\mathcal{R}$ of functions on $[0,T]$ such that $\iota$ in \eqref{eq:embedding} provides an isometric 
isomorphism to $CM_X$, see the discussion in \citet{PT}. In this case, by standard results on fractional integrals, one can choose $\mathcal{R}=L^{1/H}([0,T],dt)$ and obtains, 
$$
\langle r, \inth \rangle_{\mathcal{R},CM_X}=\int_0^T r(s) \dot\inth(s) ds =\int_0^T r(s) d\inth(s).
$$
(iii) We now assume that $X$ is a fractional Brownian motion with Hurst parameter $H<1/2$. Then, there is a Hilbert space of functions $\mathcal{R}$ on $[0,T]$ such that $\iota$ provides an isometric isomorphism on $CM_X$, see \citet{PT}. 
In this case, 
certain elements of the Cameron-Martin space may fail to be absolutely continuous. However, Corollary 2.8 in \citet{BeSPA} or the discussion preceding Definition 3.3 in \citet{CN} can be reformulated in the following way: The subspace of elements $\inth$ of the Cameron-Martin space such that $\inth$ 
has a square-integrable density $\dot\inth$ with respect to the Lebesgue measure and such that for every $r\in \mathcal{R}$
$$
\langle r, \inth \rangle_{\mathcal{R},CM_X}=\int_0^T r(s) \dot\inth(s) ds =\int_0^T r(s) d\inth(s)
$$ 
is dense. Unfortunately, the space $\mathcal{R}$ is unfavorably small and does, with probability 1, not contain the paths of a fractional Brownian motion, if $H\leq 1/4$, see \citet{CN}. This observation
 led to the notion of extended Skorokhod integration, 
see Remark \ref{rem:extendedSk}, (ii),  below.
\end{exmp}

The bottom line of these examples is, that in typical cases, the Skorokhod integral can be characterized by the identity
\begin{equation}\label{eq:SkorkhodS}
\int_0^T (SZ_s)(h) d\inth(s) = S(\delta_\mathcal{R}(Z))(h),
\end{equation}
for elements $h$ from a dense subspace of $H_X$ such that the corresponding element in the Cameron-Martin space $\inth$ is sufficiently smooth. See \citet{MV} for more examples.

\section{Wick-Skorokhod integration}\label{sec_integral}

In this section, we introduce a class of generalized Skorokhod integrals, which is applied throughout the paper. It contains the class of extended Skorokhod integrals, see e.g. \citet{CN, MV, LN}, as special case.

The definition of the integral will be relative to smoothness properties of the elements of the Cameron-Martin space. To this end, let $\mathcal{R}$ denote a vector space  of real-valued functions on $[0,T]$.
\begin{defi}
 We say that the Gaussian process $X$ is $\mathcal{R}$-\emph{dense}, if $CM_X\cap \mathcal{R}$ is a dense subset of $CM_X$. In this case, we denote 
$$
\mathcal{A}_\mathcal{R}:=\{h\in H_X;\; \inth \in \mathcal{R}\},
$$   
which defines a dense subset of the first chaos of $X$. If $CM_X\subset \mathcal{R}$ (and, consequently, $\mathcal{A}_\mathcal{R}=H_X$), we call $X$ $\mathcal{R}$-\emph{regular}.
\end{defi}

\begin{exmp}\label{exmp:regular_dense}
 We denote by $\dot L^p([0,T],\mu)$ the space of functions which are absolutely continuous with respect to the measure $\mu$ with a density in $L^p([0,T],\mu)$, $p\in [1,\infty]$. Then, Example \ref{exmp:Skorokhod} entails the following:
 Any RCLL Gaussian martingale $X$ is  $\dot L^2([0,T],dV)$-regular, fractional Brownian motion with Hurst parameter $H>1/2$ is $\dot L^{1/(1-H)}([0,T],dt)$-regular, and fractional Brownian motion with Hurst parameter 
$H<1/2$ is $\dot L^{2}([0,T],dt)$-dense. Moreover, if $X$ is weakly regulated, then, by Proposition \ref{prop:weakly_regulated}, $X$ is $R([0,T])$-regular, where $R([0,T])$ stands for the space of all regulated functions 
on $[0,T]$. Trivially, any Gaussian process $X$ is $CM_X$-regular.
\end{exmp}

In view of \eqref{eq:SkorkhodS}, we propose the following definition:
\begin{defi}
Suppose $X$ is  $\mathcal{R}$-dense.
 A process $Z:[0,T]\rightarrow (L^2_X)$ is said to be $\mathcal{R}$-\emph{Wick-Skorokhod integrable} with respect to $X$, if for every 
$h\in \mathcal{A}_\mathcal{R}$ the integral $\int_0^T (SZ_s)(h)\,d\inth(s)$ exists (in the sense of Henstock and Kurzweil) and if there is a random variable $\int_0^T Z_s\, d^{\diamond}_{\mathcal{R}} X_s \in (L^2_X)$ 
such that for every $h\in \mathcal{A}_\mathcal{R}$
$$
S\left( \int_0^T Z_s\, d^{\diamond}_{\mathcal{R}} X_s\right)(h)= \int_0^T (SZ_s)(h)\,d\inth(s).
$$
We then call $\int_0^T Z_s\, d^{\diamond}_{\mathcal{R}} X_s$ the $\mathcal{R}$-\emph{Wick-Skorokhod integral} of $Z$ with respect to $X$.
\end{defi}
Note that, if $X$ is  $\mathcal{R}$-regular, then the $S$-transform characterization of the $\mathcal{R}$-\emph{Wick-Skorokhod integral} in the above definition holds for every element $h$ in the first chaos, and not only on a dense subspace.

We sometimes write $\int_{0+}^T Z_s\, d^{\diamond}_{\mathcal{R}} X_s:=\int_{0}^T  {\bf 1 }_{(0,T]}(s)Z_s\, d^{\diamond}_{\mathcal{R}} X_s$, provided the integral on the right-hand side exists.
The same notation will be applied for other types of integrals, when integration is understood over $(0,T]$ rather than $[0,T]$.

\begin{rem}[Henstock-Kurzweil integral]\label{rem:HK}
 The Henstock-Kurzweil integral can be applied to give a meaning to integrals of the form $\int_0^T u(s)dr(s)$ for suitably pairs of functions $u,r:[0,T]\rightarrow \R$, without assuming that 
the integrator $r$ is of bounded variation. We briefly recall the construction: A \emph{gauge function} is any function $\delta:[0,T]\rightarrow (0,\infty)$. A tagged partition 
$\tau:=\{([s_{i-1},s_i],y_i);\;i=1,\ldots,n\}$ of the interval $[0,T]$ is called \emph{$\delta$-fine}, if $y_i-\delta(y_i)\leq s_{i-1}\leq y_i \leq s_{i} \leq y_i+\delta(y_i)$ for every $i=1,\ldots,n$. 
The \emph{Riemann sum} for the integral  $\int_0^T u(s)dr(s)$ with respect to the tagged partition $\tau$ is given by 
$$
S_{RS}(u,r,\tau):=\sum_{i=1}^n u(y_i)(r(s_i)-r(s_{i-1})).
$$
If there is an $I\in \R$ such that for every $\epsilon>0$ there is a gauge function $\delta$ such that $|S_{RS}(u,r,\tau)-I|<\epsilon$ for every $\delta$-fine tagged partition $\tau$, then 
$I$ is uniquely determined, denoted by $\int_0^T u(s) dr(s)$ and called the \emph{Henstock-Kurzweil integral} of $u$ with respect to $r$. A brief review of the relation between 
the Henstock-Kurzweil integral and other Stieltjes-type integrals can be found in Appendix F of Part I in \cite{DN}. We just note the following important relation to the Lebesgue-Stieltjes integral:
If $r$ is of bounded variation, then $r$ uniquely determines a signed measure $\mu_r$ via the relation
$$
\mu_r([0,t]):=r(t+)-r(0), \; 0\leq t <T,\quad \mu_r([0,T])=r(T)-r(0).
$$ 
If the Lebesgue-Stieltjes integral $\int_{0}^T u(s) \mu_r(ds)$ exists, then so does the Hen\-stock-Kurzweil integral $\int_0^T u(s)dr(s)$ and both integrals coincide.
\end{rem}

\begin{rem}\label{rem:extendedSk}
(i) The choice of the function space $\mathcal{R}$ reflects the following tradeoff: The larger the space $\mathcal{R}$, the larger the class of integrators $X$, for which the 
$\mathcal{R}$-Skorokhod integral can be defined. However, a large function space $\mathcal{R}$ means that the members of $\mathcal{R}$ may lack good smoothness properties, and, hence,
the space of integrands may become small. More precisely, if $\mathcal{R}_1\subset \mathcal{R}_2$, then any $\mathcal{R}_1$-dense Gaussian process is also  $\mathcal{R}_2$-dense. Moreoever, if $X$ is $\mathcal{R}_1$-dense, then the 
$\mathcal{R}_1$-Wick-Skorokhod integral is an extension of the $\mathcal{R}_2$-Wick-Skorokhod integral.
\\[0.1cm]
(ii) The notion of an extended Skorokhod integral in the sense of \cite{CN,MV,LN} can be cast into our setting: Indeed, in all these cases, $\mathcal{R}$ can be chosen as a suitable subspace 
of $\dot L^p([0,T],dt)$ for some fixed $p\in (1,\infty)$. It is important to note that some of these references impose integrability conditions 
on the paths of the  integrand $Z$, e.g., $Z\in L^2_X(L^2([0,T]))$ in \cite{MV} (where $p=2$). In our definition, we merely require that $\int_0^T (SZ_s)(h),d\inth(s)$ exists 
for $h\in \mathcal{A}_\mathcal{R}$. As we will work under a much weaker regularity condition than absolute continuity for the elements of the Cameron-Martin space for most of the paper, this relaxation of the integrability conditions 
 turns out to be crucial to ensure e.g. integrability of $X$ with respect to itself.
\end{rem}

The next example, which discusses the integrability of step processes, clarifies the relation of our Skorokhod type integral to Wick products:
\begin{exmp}\label{exmp:simple}
 Suppose  $X$ is weakly regulated, and, thus, every element $\inth\in CM_X$ is a regulated function.  We say, $Z$ is a \emph{simple integrand}, if it is of the form  
$$
Z:= F_0 {\bf 1}_{\{0\}}+\sum_{i=1}^n \left( G_i {\bf 1}_{(t_{i-1},t_i)} + F_i {\bf 1}_{\{t_i\}} \right),\quad 
$$
$0=t_0<t_1<\cdots<t_n=T,\; F_i,G_i \in \mathbb{D}^{1,2}_X$. Then, $t\mapsto (SZ_t)(h)$ is a real-valued step function and, because  $\inth$ is a regulated function for every $h\in H_X$, the 
Henstock-Kurweil integral $\int_0^T  (SZ_t)(h) d\inth(s)$ exists. Indeed, we get, for every $h\in H_X$ (applying the jump convention $\Delta\inth(T)=\Delta\inth^-(T)$),
\begin{eqnarray}\label{eq:S_simple_integrand}
&& \int_0^T (SZ_s)(h)\, d\inth(s) =
 (SF_0)(h)\cdot \Delta^+\inth(0)  \\ & +&\sum_{i=1}^n  (SG_i)(h)\left(\inth(t_i-)-\inth(t_{i-1}+)\right)+
(SF_i)(h)\cdot \Delta\inth(t_i).\nonumber
\end{eqnarray}
Let us now recall that two random variables $\eta, \xi \in (L^2_X)$ are said to have a \emph{Wick product}, if there is a random variable $\eta \diamond \xi \in (L^2_X)$ such that
$$
S(\eta \diamond \xi)(h)=(S\eta)(h)(S\xi)(h)
$$
for every $h\in H_X$. By Proposition 1.3.3 in \cite{Nualart}, $F\diamond g$ exists for every $F\in \mathbb{D}^{1,2}_X$, $g\in H_X$ and relates to the ordinary product via
$$
F\diamond g= Fg-\langle DF, \intg\rangle_{CM_X},\quad F\in \mathbb{D}^{1,2}_X,\;g\in H_X.
$$
In view of \eqref{eq:Stransform1} and \eqref{eq:S_simple_integrand}, we, thus, obtain for every $h\in H_X$,
\begin{eqnarray*}
 && \int_0^T (SZ_s)(h)\, d\inth(s)\\  &=& S\left( F_0\diamond(\Delta^+X_0)+\sum_{i=1}^n \left(G_i\diamond (X_{t_{i}-}-X_{t_{i-1}+}) + F_i \diamond (\Delta X_{t_{i}})\right)\right)(h)
\end{eqnarray*}
Hence,
\begin{equation*}
 \int_0^T Z_s \, d_\mathcal{R}^\diamond X_s= F_0\diamond(\Delta^+X_0)+\sum_{i=1}^n \left(G_i\diamond (X_{t_{i}-}-X_{t_{i-1}+}) + F_i \diamond (\Delta X_{t_{i}})\right), 
\end{equation*}
for any function space $\mathcal{R}$ such that $X$ is $\mathcal{R}$-dense, e.g. $\mathcal{R}=R([0,T])$. Hence, the Wick-Skorokhod integral of a simple integrand is a Young-Stieltjes sum involving Wick products, as expected for Skorokhod type integrals.  
\end{exmp}

We next relate our notion of Wick-Skorokhod integration to the usual stochastic It\^o integral \citep[see e.g.][]{Protter} in the martingale case:
\begin{thm}\label{thm:relation_ito}
Let $\mathbb{F}^X=(\F^X_t)_{t\in [0,T]}$ denote the augmentation of the filtration generated by $X$. Suppose $X$ is an $\mathbb{F}^X$-martingale with RCLL paths and variance function $V$. 
 If $Z$ is $\mathbb{F}^X$-predictable and satisfies 
$$
E\left[\int_0^T |Z_s|^2 dV(s)\right]<\infty,
$$
then the {Wick-Skorokhod integral} $\int_0^T Z_s\, d^{\diamond}_{\mathcal{R}} X_s$  exists for $\mathcal{R}=\dot L^2([0,T],dV)$ and coincides with the stochastic It\^o integral $\int_{0+}^T Z_s dX_s$. 
\end{thm}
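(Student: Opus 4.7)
The plan is as follows. Let $I:=\int_{0+}^T Z_s\,dX_s$ denote the usual It\^o integral, which belongs to $(L^2_X)$ by the It\^o isometry $E[I^2]=E[\int_0^T |Z_s|^2\,dV(s)]$. To identify $I$ with the $H_X$-Wick-Skorokhod integral of $Z$, I would verify that for every $h\in H_X$
\[
(SI)(h)=\int_0^T (SZ_s)(h)\,d\inth(s).
\]
Before doing this I would record some structural facts. Since $X$ is a Gaussian martingale, its first chaos $H_X$ consists precisely of the It\^o integrals $\int_0^T f(s)\,dX_s$, $f\in L^2([0,T],dV)$. For any such $h$, It\^o's isometry gives $\inth(t)=E[X_t h]=\int_0^t f(s)\,dV(s)$, so $\inth$ is of bounded variation; by Remark \ref{rem:HK} the Henstock-Kurzweil integral on the right-hand side reduces to the Lebesgue-Stieltjes integral $\int_0^T(SZ_s)(h)f(s)\,dV(s)$ whenever the latter is well-defined.

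I would first establish the identity on simple predictable integrands of the form $Z=\sum_{i=1}^n Y_{i-1}{\bf 1}_{(t_{i-1},t_i]}$ where each $Y_{i-1}$ is a Borel function $G_i(X_{s_1^{(i)}},\ldots,X_{s_{D_i}^{(i)}})$ of finitely many $X_s$ with $s\le t_{i-1}$; such processes are dense in $L^2_{\mathrm{pred}}(dP\otimes dV)$. For such $Z$, $I=\sum_i Y_{i-1}(X_{t_i}-X_{t_{i-1}})$, and applying the shift formula \eqref{eq:shift} to each summand gives
\[
E\!\left[\exp^\diamond(h)\,Y_{i-1}(X_{t_i}-X_{t_{i-1}})\right]=E\!\left[\tilde Y_{i-1}\bigl((X_{t_i}-X_{t_{i-1}})+(\inth(t_i)-\inth(t_{i-1}))\bigr)\right],
\]
where $\tilde Y_{i-1}$ is obtained by replacing each $X_{s_j^{(i)}}$ by $X_{s_j^{(i)}}+\inth(s_j^{(i)})$. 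Since the shift is deterministic, $\tilde Y_{i-1}$ remains $\F^X_{t_{i-1}}$-measurable, so the martingale property of $X$ forces $E[\tilde Y_{i-1}(X_{t_i}-X_{t_{i-1}})]=0$, leaving $(SY_{i-1})(h)\,(\inth(t_i)-\inth(t_{i-1}))$. Summing over $i$ matches the step-function Lebesgue-Stieltjes integral $\int_0^T(SZ_s)(h)\,d\inth(s)$.

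To pass to general predictable $L^2$-integrands I would use density. The map $Z\mapsto I$ is an isometry from $L^2_{\mathrm{pred}}(dP\otimes dV)$ into $(L^2_X)$, so the left-hand side depends continuously on $Z$ via $|(SI)(h)|\le \|\exp^\diamond(h)\|_{(L^2_X)}\|I\|_{(L^2_X)}$. The right-hand side is also continuous in $Z$: Cauchy-Schwarz together with $|(SZ_s)(h)|\le \|\exp^\diamond(h)\|_{(L^2_X)}\|Z_s\|_{(L^2_X)}$ and $f\in L^2(dV)$ yields $|\int_0^T(SZ_s)(h)f(s)\,dV(s)|\le \|f\|_{L^2(dV)}\|\exp^\diamond(h)\|_{(L^2_X)}\|Z\|_{L^2(dP\otimes dV)}$. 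Since the simple processes above are dense, the identity extends.

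The main obstacle is conceptual rather than technical: in the discontinuous Gaussian martingale setting the Wick exponential $\exp^\diamond(h)$ is generally \emph{not} the stochastic (Dol\'eans-Dade) exponential and hence not a Radon-Nikodym density, so the classical Girsanov route to such $S$-transform identities is unavailable. The argument must therefore rely directly on the algebraic shift identity \eqref{eq:shift} combined with the martingale property to kill the cross terms, which is exactly what makes the simple-integrand step go through. Once that is in place, the reduction of the Henstock-Kurzweil integral to the Lebesgue-Stieltjes integral via the bounded-variation structure of $\inth$ and the $L^2$-extension are standard.
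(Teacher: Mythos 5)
Your argument is correct in substance but follows a genuinely different route from the paper. The paper fixes $h=aX_0+\int_{0+}^T\h(s)\,dX_s$ and realizes $\exp^\diamond(h)$ as the terminal value of the square-integrable martingale $\mathcal{E}_t=\exp\{aX_0-\tfrac{a^2}{2}V(0)\}\exp\{\int_{0+}^t\h\,dX-\tfrac12\int_{0+}^t\h^2\,dV\}$ (a martingale by the independent increments of a Gaussian martingale, even though it is not the Dol\'eans-Dade exponential, so your closing remark is on point); it then computes $E[\mathcal{E}_TY_T]$ for $Y=\int_{0+}^\cdot Z\,dX$ directly via It\^o's formula, integration by parts and the quadratic covariation, using Emery's inequality to kill the stochastic-integral terms and the independence of $\Delta X_s$ from $\F^X_{s-}$ to factor the jump contributions. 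This handles general predictable $Z\in L^2(dP\otimes dV)$ in one stroke. You instead prove the $S$-transform identity first for elementary predictable integrands via the Gaussian shift formula \eqref{eq:shift} plus the martingale property (which annihilates the cross term $E[\tilde Y_{i-1}(X_{t_i}-X_{t_{i-1}})]$), and then extend by the isometry and an $L^2$-density/continuity estimate on both sides. Your route is more elementary (no semimartingale It\^o formula, no Emery inequality) at the price of the approximation step; two small points you should make explicit are (a) that the isometry $E[I^2]=E[\int_0^T|Z_s|^2\,dV(s)]$ uses that $V$ is the predictable compensator of $[X]$ (the paper spells this out via the decomposition $V=V^c+V^d$), and (b) that the cylinder functions $G_i$ in your dense class should be taken bounded, since otherwise the shifted variables $\tilde Y_{i-1}$ produced by \eqref{eq:shift} need not be integrable against $X_{t_i}-X_{t_{i-1}}$ when $Y_{i-1}$ is merely in $L^2$. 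With those two clarifications the proof is complete.
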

\begin{rem}
The statement of the previous theorem cannot be extended to semimartingales, because Skorokhod integration is based on the Wick product. Indeed, suppose that $X$ is a Gaussian RCLL semimartingale. If $X$ is not an $\mathbb{F}^X$-martingale, then we find 
$0\leq a<b<c<d\leq T$ such that $C:=E[(X_b-X_a)(X_d-X_c)]\neq 0$.  Consider the simple predictable process $Z_t=(X_b-X_a){\bf 1}_{(c,d]}(t)$. Then, by Example \ref{exmp:simple},
\begin{eqnarray*}
\int_0^T Z_s \, d_{R([0,T])}^\diamond X_s&=&(X_b-X_a)\diamond (X_d-X_c)= (X_b-X_a)(X_d-X_c)-C \\ &\neq& (X_b-X_a)(X_d-X_c),
\end{eqnarray*}
where the right-hand side equals the It\^o integral of $Z$ with respect to $X$.
\end{rem}
\begin{proof}[Proof of Theorem \ref{thm:relation_ito}]
Note first that, by the martingale property of $X$, the set  $D_X$ of stochastic discontinuties of $X$ consists of the jump times of the nondecreasing RCLL variance 
function $V$ of $X$, and is thus at most countable.
 By Theorem 1.8 in \cite{JM}, 
$$
X_t=X^c_t+\sum_{s\in D_X\cap (0,t]} (X_{s}-X_{s-}),
$$
where $X^c$ is a Gaussian martingale with continuous paths. 
Of course, in this decomposition, the pathwise left limits of the RCLL martingale $X$ coincide, for every $s\in(0,T]$, $P$-almost surely with the weak $(L^2_X)$-limits $X_{s-}$.
We write $V^c$ and $V^d$ for the continuous and the discrete part of $V$. Then, 
 $V^c$ is the quadratic variation of $X^c$, $V^d$ is the predictable 
compensator of $\sum_{s\in D_X\cap (0,\cdot]} (X_{s}-X_{s-})^2$, and, hence, $V$ is the predictable compensator of the quadratic variation $[X]$ of $X$. Thus, if $Z$ is predictable and satisfies 
the assumed integrability condition, then, by the isometry of the stochastic It\^o integral, $\int_{0+}^T Z_s dX_s$ exists in $(L^2_X)$ and satisfies 
$$
E\left[ \left|\int_{0+}^T Z_s dX_s\right|^2\right]=E\left[\int_{0+}^T |Z_s|^2 d[X]_s\right]=E\left[\int_{0+}^T |Z_s|^2 dV(s)\right].
$$
In particular, the first chaos of $X$ can be represented as 
$$
H_X=\left\{aX_0+\int_{0+}^T \h(s) dX_s;\quad a\in \R, \h\in L^2([0,T], dV) \right\}.
$$
We now fix a generic element $h=aX_0+\int_{0+}^T \h(s) dX_s$ of the first chaos of $X$ and define 
\begin{eqnarray*}
\mathcal{E}_t&:=&\exp\left\{aX_0 -\frac{a^2}{2}  V(0)\right\}\exp\left\{\int_{0+}^t \h(s)dX_s -\frac{1}{2}\int_{0+}^t |\h(s)|^2 dV(s) \right\},\\ Y_t&:=&\int_{0+}^t Z_s dX_s.
\end{eqnarray*}
Note that $\inth(t)=aV(0)+\int_{0+}^t \h(s)dV(s)$. Thus, in view of Remark \ref{rem:HK}, all we need to show is that
\begin{equation}\label{eq:relation_ito}
 E[\mathcal{E}_T Y_T] = \int_{0+}^T E[\mathcal{E}_T Z_s] \h(s) dV(s),
\end{equation}
 where the integral on the right-hand side is a Lebesgue-Stieltjes integral.
Applying It\^o's formula for semimartingales \citep[][Theorem II.32]{Protter}, we obtain after some  elementary manipulations
$$
 \mathcal{E}_t=\mathcal{E}_0+\int_{0+}^t \mathcal{E}_{s-} \h(s) d X^c_s +\sum_{s\in D_X\cap(0,t]} \mathcal{E}_{s-}\left(e^{\h(s)\Delta X_s-\frac{1}{2} \h(s)^2\Delta V(s)}-1\right).
$$
Integration by parts (for semimartingales) thus yields
\begin{equation}\label{eq:hilf0001}
 \mathcal{E}_t Y_t= \int_{0+}^t \mathcal{E}_{s-} dY_s + \int_{0+}^t {Y}_{s-} d\mathcal{E}_s + [\mathcal{E},Y]_t
\end{equation}
where the quadratic covariation of $\mathcal{E}$ and $Y$ is given by
\begin{eqnarray} \label{eq:hilf0002}
[\mathcal{E},Y]_t&=& \int_{0+}^t  Z_{s} \mathcal{E}_{s-} \h(s) dV^c(s) \nonumber \\ &&+ \sum_{s\in D_X\cap(0,t]} Z_{s}\mathcal{E}_{s-}\left(e^{\h(s)\Delta X_s-\frac{1}{2} \h(s)^2\Delta V(s)}-1\right)\Delta X_s.
\end{eqnarray}
Since $\mathcal{E}$ and $Y$ are square-integrable RCLL martingales, we may conclude from Emery's inequality \citep[][Theorem V.3]{Protter} that both stochastic integrals 
in \eqref{eq:hilf0001} are martingales (of class $\mathcal{H}^1$) and, consequently, have zero expectation. Taking into account, that, for every $s\in D_X$, the jumps sizes
 $\Delta X_s$ 
are independent of $\mathcal{F}^X_{s-}$ and $Z_s$ is $\mathcal{F}^X_{s-}$-measurable by predictability, we obtain, thanks to \eqref{eq:hilf0001}-\eqref{eq:hilf0002},
\begin{eqnarray*}
 E[\mathcal{E}_T Y_T] &=& 
 \int_{0+}^T  E[Z_{s} \mathcal{E}_{s-}] \h(s) dV^c(s) \\ && + \sum_{s\in D_X\cap(0,T]} E[Z_{s}\mathcal{E}_{s-}]E\left[\left(e^{\h(s)\Delta X_s-\frac{1}{2} \h(s)^2\Delta V(s)}-1\right)\Delta X_s\right] \\
\end{eqnarray*}
As, by \eqref{eq:Stransform1},
$$
E\left[\left(e^{\h(s)\Delta X_s-\frac{1}{2} \h(s)^2\Delta V(s)}-1\right)\Delta X_s\right]=\h(s) \Delta V(s),
$$ 
we arrive at
$$
E[\mathcal{E}_T Y_T] =\int_{0+}^T  E[Z_{s} \mathcal{E}_{s-}] \h(s) dV(s).
$$
Now, since $Z_s$ is $\F^X_{s-}$-measurable and $\mathcal{E}$ is a martingale, we finally obtain
$$
[Z_{s} \mathcal{E}_{s-}]=E[Z_{s} E[\mathcal{E}_{T}|\F^X_{s-}]]=E[Z_s \mathcal{E}_{T}],
$$
which yields \eqref{eq:relation_ito}, and finishes the proof.
\end{proof}

For the statement and proof of the It\^o formula, we formulate the regularity requirement of elements in the Cameron-Martin space in terms of the quadratic variation as follows: We denote by $W^*_2=W^*_2([0,T])$ the subspace of 
regulated functions $u:[0,T]\rightarrow \R$ such that 
$$
\sigma_2(u):=\sum_{s\in (0,T]} |\Delta^- u(s)|^2+\sum_{s\in [0,T)} |\Delta^+ u(s)|^2<\infty 
$$
and such that, for every $\epsilon>0$, there is a partition $\lambda$ of $[0,T]$ such that for all refinements $\kappa=\{0=s_0<s_1<\cdots<s_n\leq T\}$ of $\lambda$
$$
\left|\left(\sum_{i=1}^n |u(s_i)-u(s_{i-1})|^2\right)- \sigma_2(u) \right|<\epsilon.
$$ 
Roughly speaking, this requirement means that the continuous part of the quadratic variation (in the sense of stochastic analysis) must vanish. 

The following structural assumption on the Cameron-Martin space will be assumed for the main results of this paper:
\begin{itemize}
 \item[{\bf (H)}] The Cameron-Martin space $CM_X$ of $X$ is separable and $X$ is  $W^*_2$-dense.
\end{itemize}
 Under condition $(H)$, we can thus study the Wick-Skorokhod integral for $\mathcal{R}= {W}^*_2$. In order to lighten the notation, we skip the subscript from the notation of the Wick-Skorokhod integral 
in this case and simply write
$$
\int_0^T Z_s\, d^{\diamond} X_s:=\int_0^T Z_s\, d^{\diamond}_{{W}^*_2} X_s.
$$
\begin{rem}\label{rem:Wp}
(i) Recall that, for $p\geq 1$, the $p$-variation of a  regulated function $u:[0,T]\rightarrow \R$ is defined to be
\begin{eqnarray*}
v_p(u)&:=&\sup\Bigl\{ \sum_{j=1}^m |u(t_j)-u(t_{j-1})|^p;\\ && \quad \quad \; m\in \N,\; 0=t_0<t_1<\cdots<t_{m-1}<t_m= T\Bigr\}.  
\end{eqnarray*}
A  regulated function $u$ is said to belong to $W_p([0,T])$, if $v_p(u)<\infty$. By Lemmas II.2.3 and II.2.14 in \cite{DN}, $W_p([0,T])\subset W_2^*([0,T])\subset W_2([0,T])$ 
for every $1\leq p <2$. With this notation,  $W_1([0,T])$ is the space of bounded variation functions on $[0,T]$. \\[0.1cm] 
(ii)  Assumption (H) will be discussed in some more detail in Section \ref{sec_reg} below. We note that it is obviously satisfied when $X$ has RCLL paths (or, less restrictively, 
is stochastically RCLL) and, for every fixed
$s\in [0,T]$, the covariance function $R(t,s):=E[X_tX_s]$ is of bounded variation as function in $t$. This already covers a large class of relevant Gaussian processes. \\[0.1cm] 
(iii) It is  important, that the regularity requirement is imposed on the elements of the Cameron-Martin space and not on the paths of the process $X$. For instance, in the fractional Brownian motion case, $X$ is 
$W^*_2$-regular for every choice of the Hurst parameter $H\in (0,1)$, and then, in particular, satisfies (H). For $H\geq 1/2$, this follows from item (i) and Example \ref{exmp:regular_dense}, while for $H<1/2$ the asserted regularity of the Cameron-Martin space 
is implied by item (i) in conjunction with Theorem 1.1 and Example 2.8 in \citet{Fal}.
 However, it is well-known that almost every path of the fractional Brownian motion belongs to 
${W}^*_2$, if and only if $H>1/2$. 
\end{rem}

\section{It\^o's formula} \label{sec_ito}

After these preparations on Wick-Skorokhod integration and weakly regulated processes, we can now  state and prove a
general Gaussian It\^o formula in the presence of stochastic discontinuities.

Let us first recall that $V^{\pm}(t)$ denotes the variance of $X_{t\pm}$ and that, for a sufficiently integrable function $F$, $\psi_F$ is given by the convolution
$$
\psi_F(t,x):=\int_\R F(x+\sqrt{t}y )\frac{1}{\sqrt{2\pi}} e^{-y^2/2}dy,
$$
see \eqref{eq:psi_F}.
\begin{thm}\label{thm:Ito}
 Suppose $X$ is a centered Gaussian process  satisfying (H), the variance function $V$ of $X$ is of bounded variation and
\begin{eqnarray}\label{eq:ito_ass}
&& \sum_{s\in D_X\cap [0,T)} \left(E[(\Delta^+ X_s)^2]+(V(s+)-V^+(s)) \right) \\ &+& \sum_{s\in D_X\cap (0,T]}  \left( E[(\Delta^- X_s)^2]+(V(s-)-V^-(s))\right)  <\infty.\nonumber 
\end{eqnarray}
Assume $F\in \mathcal{C}^2(\mathbb{R})$ and $F,F',F''$ satisfy the growth condition \eqref{eq:growth} with $\lambda=\sup_{t\in [0,T]} V(t)$. 
Then, $\int_0^T F'(X_s) d^\diamond X_s$ exists and the following It\^o formula holds in $(L^2_X)$:
\begin{eqnarray*}
 &&F(X_T)-F(X_0)\\ &=&\int_0^T F'(X_s) d^\diamond X_s + \frac{1}{2} \int_0^T F''(X_s) dV(s) \\
&& + \sum_{s\in D_X \cap (0,T]} \Bigl( F(X_s)- \psi_F(V(s-)-V^-(s), X_{s-})-F'(X_s) \Delta^-X_s \\
&& \quad\quad\quad\quad\quad + \frac{1}{2} F''(X_s)(E[(\Delta^-X_s)^2] +V(s-)-V^-(s)) \Bigr) \\ 
 &&+ \sum_{s\in D_X \cap [0,T)} \Bigl( \psi_F(V(s+)-V^+(s), X_{s+})-F(X_s) -F'(X_s) \Delta^+X_s \\
&& \quad\quad\quad\quad\quad - \frac{1}{2} F''(X_s)(E[(\Delta^+X_s)^2] +V(s+)-V^+(s))\Bigr).
\end{eqnarray*}
Here, the set $D_X$ of stochastic discontinuities of $X$ is at most countable and both sums converge absolutely in $(L^2_X)$.
\end{thm}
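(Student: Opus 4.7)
The plan is to verify the formula via the $S$-transform: under (H) the set $\mathcal{W}^*_2$ is dense in $H_X$, so equality in $(L^2_X)$ is determined by the $S$-transform on $\mathcal{W}^*_2$. Fix $h\in\mathcal{W}^*_2$ and write $\phi:=\psi_F$, which solves the heat equation $\partial_t\phi=\tfrac12\partial_{xx}\phi$ on $[0,\lambda]\times\R$ and satisfies $\partial_x\phi=\psi_{F'}$, $\partial_{xx}\phi=\psi_{F''}$. Combining Example \ref{ex:F(X)} with the shift formula \eqref{eq:shift} and Corollary \ref{cor:weakly_regulated} (which locates $X_{s\pm}$ in $H_X$) one obtains
\begin{eqnarray*}
 (SF(X_t))(h) &=& \phi(V(t),\inth(t)),\\
 (S\,\psi_F(V(s\pm)-V^\pm(s),X_{s\pm}))(h) &=& \phi(V(s\pm),\inth(s\pm)),
\end{eqnarray*}
so applying $S$ reduces the assertion to a purely deterministic identity for the regulated function $G(s):=\phi(V(s),\inth(s))$: namely, $G(T)-G(0)$ must decompose as $\int_0^T\psi_{F'}(V(s),\inth(s))\,d\inth(s)$ (Henstock-Kurzweil), $\tfrac12\int_0^T\psi_{F''}(V(s),\inth(s))\,dV(s)$ (Lebesgue-Stieltjes), and two explicit one-sided jump series supported on the countable set $D_X$ from Proposition \ref{prop:disc}.

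The technical core is to establish this deterministic identity via the Henstock-Kurzweil chain rule from the appendix, applied to $\phi$ along the pair $(V,\inth)$. Since $V$ has bounded variation and $\inth\in W_2^*$, the chain rule produces the two Stieltjes-type integrals above (with coefficient $\tfrac12\psi_{F''}$ in the $dV$-integral coming from $\partial_t\phi=\tfrac12\partial_{xx}\phi$) together with a quadratic correction in the $\inth$-variable whose continuous part vanishes by the very definition of $W_2^*$ and whose discrete remainder is a countable sum of terms $\tfrac12\psi_{F''}(V(s),\inth(s))(\Delta^\pm\inth(s))^2$. Grouping these quadratic pieces with the standard chain-rule jumps $G(s\pm)-G(s)-\partial_x\phi(V(s),\inth(s))\Delta^\pm\inth(s)$, and rewriting $\phi(V(s\pm),\inth(s\pm))$ via the Gaussian convolution semigroup as the $S$-transform of $\psi_F(V(s\pm)-V^\pm(s),X_{s\pm})$ (the computation carried out in Example \ref{ex:F(X)}), turns each deterministic jump piece into the $S$-transform of the corresponding probabilistic summand in the theorem.

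It remains to promote the $S$-transform equality to an identity in $(L^2_X)$: one establishes unconditional $(L^2_X)$-convergence of each jump series by noting that $\Delta^\pm X_s$ at distinct $s\in D_X$ are orthogonal in $H_X$, bounding the $L^2$-norm of a finite partial sum by a sum of individual norms, and then applying a second-order Taylor expansion of $\phi$ combined with \eqref{eq:growth} and $\sup V\leq\lambda$ to dominate each summand by a constant multiple of $E[(\Delta^\pm X_s)^2]+(V(s\pm)-V^\pm(s))$; the summability hypothesis \eqref{eq:ito_ass} then delivers unconditional convergence, while the Lebesgue-Stieltjes integral and $F(X_T)-F(X_0)$ both lie in $(L^2_X)$ by the growth condition. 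The verified $S$-transform identity then forces the Wick-Skorokhod integral $\int_0^T F'(X_s)\,d^\diamond X_s$ to exist and equal the remaining difference. I expect the main obstacle to be the jump bookkeeping: splitting each two-sided jump of $G$ into its $\Delta^-$ and $\Delta^+$ components and correctly tracking the variance defects $V(s\pm)-V^\pm(s)$—which appear precisely because $X_{s\pm}$ are weak, not strong, $(L^2_X)$-limits and hence $F(X_{s\pm})\neq\psi_F(V(s\pm)-V^\pm(s),X_{s\pm})$ in general—in such a way that the deterministic chain-rule jumps exactly match the probabilistic jump expressions appearing in the statement.
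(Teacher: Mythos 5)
Your overall strategy is exactly the one the paper follows: reduce to an $S$-transform identity on the dense set $\mathcal{W}^*_2$, prove the resulting deterministic identity for $t\mapsto\psi_F(V(t),\inth(t))$ by the Henstock--Kurzweil chain rule of the appendix (this is Proposition \ref{prop:ito}), identify the deterministic jump pieces with $S$-transforms of the probabilistic summands via Example \ref{ex:F(X)}, and upgrade to an identity in $(L^2_X)$ by showing that the jump series converge absolutely in norm. Two points in your write-up are inaccurate, and the second half of the argument is where the real work sits.

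First, the chain rule (Theorem \ref{thm:chain_rule}) does not output a ``discrete quadratic remainder'' of the form $\tfrac12\psi_{F''}(V(s),\inth(s))(\Delta^\pm\inth(s))^2$: the $W^*_2$ property of $\inth$ is used only to kill the Riemann-type remainder along refining partitions, and the jump corrections that survive are $-\tfrac12\psi_{F''}(V(s),\inth(s))\,\Delta^\pm V(s)$ (matching the atoms of the Lebesgue--Stieltjes $dV$-integral), not squares of jumps of $\inth$. Matching these deterministic pieces with the probabilistic jump terms is then not mere bookkeeping; it rests on the $S$-transform identity
\begin{equation*}
S\bigl(F'(X_s)\,\Delta^+X_s\bigr)(h)=S(F''(X_s))(h)\,E[X_s\Delta^+X_s]+S(F'(X_s))(h)\,\Delta^+\inth(s),
\end{equation*}
obtained by projecting $\Delta^+X_s$ onto $\Span\{X_s\}$ and integrating by parts (see \eqref{eq:Stransform_derivative}), combined with the algebraic identity $2E[X_s\Delta^+X_s]+E[(\Delta^+X_s)^2]+(V(s+)-V^+(s))=\Delta^+V(s)$ of \eqref{eq:hilf0003} and its left-sided analogue. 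This is precisely where the coefficients $E[(\Delta^\pm X_s)^2]+(V(s\pm)-V^\pm(s))$ and the opposite signs of the second-derivative terms in the two sums come from; your proposal leaves this step implicit. Second, your claim that the increments $\Delta^\pm X_s$ at distinct $s\in D_X$ are orthogonal in $H_X$ is false for general Gaussian processes (take $X_t=\xi f(t)$ for a single Gaussian $\xi$ and a regulated $f$ with two jumps: the jumps are perfectly correlated). Fortunately you do not need it: the bound you then invoke is just the triangle inequality, and the paper indeed establishes $\sum_s E[|J^\pm_s|^2]^{1/2}<\infty$ via a second-order Taylor expansion, H\"older's inequality and the growth condition \eqref{eq:growth}, which yields unconditional convergence directly from \eqref{eq:ito_ass}.
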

Note first that, under the assumptions of Theorem \ref{thm:Ito}, $X$ is weakly regulated by Corollary \ref{cor:weakly_regulated} and has at most countably many stochastic 
discontinuities by Proposition \ref{prop:disc}.

We prove Theorem \ref{thm:Ito} by an $S$-transform approach, which originates in the work by \citet{Kubo} on It\^o's formula 
for generalized functionals of a Brownian motion in the setting of white noise analysis. It has since then be succesfully applied to wider classes 
of (stochastically) continuous Gaussian processes, see e.g. \citet{BeSPA, BeBernoulli, AK, LV, Leb}. In a closely related development, the Malliavin calculus approach to 
It\^o's formula, replaces the $S$-transform (and, thus, the pairing with  Wick exponentials) by    pairings with Hermite polynomials,
see \citet{AMN,MV,KRV,LN}.

The key idea of the $S$-transform approach to It\^o's formula is the following: While the process $F(X_t)$ may lack good path regularity, its $S$-transform $t\mapsto  
S(F(X_t))(\eta)$ is typically more regular and may be expanded  via a `classical' chain rule. In a second step, the resulting terms, which appear after application of the chain rule, must be identified
as the $S$-transforms of the different terms in the Gaussian It\^o formula.  
The main contribution to this technique of proof in the present paper is to deal with the jumps that occur 
at the times of stochastic discontinuities and to make this technique applicable under the very weak regularity assumption (H) on the Cameron-Martin space. 

Expanding the $S$-transform of  $F(X_t)$ via the chain rule in Theorem \ref{thm:chain_rule} for the Henstock-Kurzweil integral leads to the following 
result.
\begin{prop}\label{prop:ito}
Let all assumptions of Theorem \ref{thm:Ito} be in force. Then, for every $h\in \mathcal{A}_{W^*_2}$, 
$\int_0^T (S\, F'(X_s))(h)\,d\inth(s)$ exists as Henstock-Kurzweil integral and 
\begin{eqnarray*}
 &&(S\,F(X_T))(h)\\&=& (S\,F(X_0))(h) + \int_0^T (S\, F'(X_s))(h)\,d\inth(s)+ \frac{1}{2} \int_0^T (S\, F''(X_s))(h)\,dV(s) \\ 
&& +  \sum_{s\in D_X \cap (0,T]} \Bigl((S\,F(X_s))(h)- S(\,\psi_F(V(s-)-V^-(s), X_{s-}))(h) \\ && \quad\quad-(S\,F'(X_s)) \Delta^-\inth(s) -
\frac{1}{2} (S\,F''(X_s)) \Delta^-V(s) \Bigr)  \\ 
&& +  \sum_{s\in D_X \cap [0,T)} \Bigl(S(\,\psi_F(V(s+)-V^+(s), X_{s+}))(h) -(S\,F(X_s))(h) \\ && \quad\quad-(S\,F'(X_s)) \Delta^+\inth(s) -
\frac{1}{2} (S\,F''(X_s)) \Delta^+V(s) \Bigr),
\end{eqnarray*}
where the two sums converge absolutely.
\end{prop}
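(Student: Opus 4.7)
The starting point is Example \ref{ex:F(X)}, which gives the closed-form $S$-transform $(S\,F(X_t))(h) = \psi_F(V(t), \inth(t))$ for every $h \in H_X$. Differentiating under the Gaussian integral in \eqref{eq:psi_F} (justified by the growth condition \eqref{eq:growth}) shows that $\psi_F$ is jointly smooth with $\partial_x \psi_F = \psi_{F'}$ and $\partial_{xx}\psi_F = \psi_{F''}$, and it satisfies the heat equation $\partial_t \psi_F = \tfrac{1}{2}\psi_{F''}$. Since in addition $(S\,F^{(k)}(X_s))(h) = \psi_{F^{(k)}}(V(s), \inth(s))$ for $k=0,1,2$, the right-hand side of the proposition is morally a chain-rule expansion of $\phi(t):=\psi_F(V(t), \inth(t))$, in which the $\partial_t\psi_F\,dV$-terms are converted to $\tfrac{1}{2}\psi_{F''}\,dV$-terms via the heat equation. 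The plan is thus to apply the chain rule for the Henstock--Kurzweil integral from the appendix (Theorem \ref{thm:chain_rule}) to $\phi$, whose two inputs have exactly the regularity needed: $V$ is of bounded variation (hence in $W^*_2$), and $\inth \in W^*_2$ because $h \in \mathcal{W}^*_2$.

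The chain rule will produce, in the Henstock--Kurzweil sense,
\begin{eqnarray*}
\phi(T) - \phi(0) &=& \int_0^T \partial_t \psi_F(V(s), \inth(s))\,dV(s) \\
&& + \int_0^T \partial_x \psi_F(V(s), \inth(s))\,d\inth(s) + J,
\end{eqnarray*}
where $J$ collects the second-order Taylor corrections at the (at most countably many) jumps of $V$ and $\inth$; the continuous quadratic correction vanishes because $\inth \in W^*_2$ and $V$ is of bounded variation. By Proposition \ref{prop:disc} and the argument used in its proof, both $V$ and $\inth$ are continuous off $D_X$, so $J$ decomposes into a left-sided sum over $D_X \cap (0,T]$ and a right-sided sum over $D_X \cap [0,T)$. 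Substituting $\partial_t \psi_F = \tfrac{1}{2}\psi_{F''}$ in both the $dV$-integral and the coefficients of $\Delta^\pm V(s)$ in $J$, and then rewriting the values at one-sided limits via the Example \ref{ex:F(X)} identities $\psi_F(V(s\pm), \inth(s\pm)) = S\bigl(\psi_F(V(s\pm)-V^\pm(s),\,X_{s\pm})\bigr)(h)$, recovers precisely the two jump sums in the statement.

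For absolute convergence of those sums, a two-variable Taylor expansion of $\psi_F$ around $(V(s), \inth(s))$ controls each summand by a constant multiple of $|\Delta^\pm V(s)|^2 + |\Delta^\pm V(s)|\,|\Delta^\pm \inth(s)| + |\Delta^\pm \inth(s)|^2$; the constant depends only on sup bounds of the higher derivatives of $\psi_F$ on strips $[0,\lambda]\times \R$, which are finite thanks to the growth condition on $F, F', F''$. Bounded variation of $V$ and $\inth \in W^*_2$ give $\sum_s |\Delta^\pm V(s)|^2,\; \sum_s |\Delta^\pm \inth(s)|^2 < \infty$, and Cauchy--Schwarz handles the cross term. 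The main technical obstacle will be to deploy Theorem \ref{thm:chain_rule} precisely enough to extract the jump remainders in the asymmetric form given in the statement --- that is, with the coefficients of $\Delta^\pm \inth(s)$ and $\Delta^\pm V(s)$ evaluated at $s$ rather than at the one-sided limits $s\pm$ --- and to confirm existence of the Henstock--Kurzweil integrals $\int_0^T (S\,F'(X_s))(h)\,d\inth(s)$ as part of the chain-rule conclusion.
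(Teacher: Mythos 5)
Your overall strategy is exactly the paper's: write $(S\,F(X_t))(h)=\psi_F(V(t),\inth(t))$, apply the Henstock--Kurzweil chain rule of Theorem \ref{thm:chain_rule} with $u_1=\inth\in W^*_2$ and $u_2=V\in W_1$, use the heat equation $\partial_t\psi_F=\tfrac12\psi_{F''}$ to turn the $dV$-terms into $\tfrac12\psi_{F''}\,dV$-terms, restrict the jump sums to $D_X$, and re-express everything through the identities of Example \ref{ex:F(X)}. Two of your stated worries are non-issues: Theorem \ref{thm:chain_rule} already states the jump remainders with the first-order coefficients evaluated at $u(s)$ (not at $u(s\pm)$), so the ``asymmetric form'' comes for free, and the absolute convergence of the two sums (as well as the existence of $\int_0^T\psi_{F'}(V(s),\inth(s))\,d\inth(s)$ as a Henstock--Kurzweil integral) is part of that theorem's conclusion, so your separate Taylor-expansion argument for convergence is redundant --- and, as written, it would not go through, since a genuine second-order two-variable expansion of $\psi_F$ requires $\partial_t^2\psi_F$ and $\partial_t\partial_x\psi_F$, which are not controlled at $t=0$ without assuming more than $F\in\mathcal{C}^2$.

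The genuine gap is that you never verify the hypotheses \eqref{eq:reg_chain_rule} of Theorem \ref{thm:chain_rule}, and your justification --- that ``$\psi_F$ is jointly smooth'' --- is false at the boundary $t=0$ precisely where it matters. The map $t\mapsto\psi_{F'}(t,x)$ is \emph{not} Lipschitz near $t=0$ (its $t$-derivative behaves like $t^{-1/2}$), which is exactly why the appendix proves a bespoke ``mixed regularity'' chain rule instead of invoking the Lipschitz version from \citet{No}. What must be shown is the second line of \eqref{eq:reg_chain_rule}: that $|\psi_{F'}(t,x)-\psi_{F'}(s,x)|\leq K(x,t,s)\,|t-s|^{1/2}$ with $K$ \emph{continuous on all of} $\R\times[0,\lambda]^2$ and vanishing on the diagonal $t=s$. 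The paper obtains this by writing $\psi_{F'}(t,x)-\psi_{F'}(s,x)=(\sqrt{t}-\sqrt{s})\int_\R\int_0^1\bigl(F''(x+y(\sqrt{s}(1-v)+v\sqrt{t}))-F''(x)\bigr)\,dv\,\tfrac{y}{\sqrt{2\pi}}e^{-y^2/2}\,dy$ --- the subtraction of $F''(x)$, permitted because $\int y e^{-y^2/2}dy=0$, is what makes $K$ vanish and stay continuous at $(x_0,0,0)$ --- together with $|\sqrt{t}-\sqrt{s}|\leq|t-s|^{1/2}$. One also needs the separate Taylor argument showing that $\partial_t\psi_F(0,x)=\tfrac12 F''(x)$, i.e.\ that the heat equation holds up to $t=0$. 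Without these verifications the appeal to Theorem \ref{thm:chain_rule} is unjustified, and this verification is the actual technical content of the proposition's proof; the rest of your outline matches the paper step for step.
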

\begin{proof}
 Recall that, by \eqref{eq:SF(X)},
$$
(SF(X_t))(h)= \psi_F(V(t),\inth(t)).
$$
So, we first check that $\psi_F$ satisfies the regularity requirements of Theorem \ref{thm:chain_rule}. By the subexponential growth condition \eqref{eq:growth}, we can interchange differentiation and integration 
and obtain, for every $x\in \R$, $t\in [0,\lambda]$
\begin{equation}\label{eq:heat1}
\frac{\partial}{\partial x} \psi_F(t,x)= \psi_{F'}(t,x),\quad \frac{\partial^2}{\partial x^2} \psi_F(t,x)= \psi_{F''}(t,x).
\end{equation}
As $\psi_{F''}$ is continuous on $[0,\lambda]\times \R$, the Lipschitz condition in the first line of \eqref{eq:reg_chain_rule} is clearly satisfied.
Moreover, for $t\in (0,\lambda]$ and $x\in \R$, integration by parts yields
\begin{equation}\label{eq:heat2}
\frac{\partial}{\partial t} \psi_F(t,x)= \frac{1}{2} \int_\R F'(x+\sqrt{t}y )\frac{y}{\sqrt{2\pi t}} e^{-y^2/2}dy=\frac{1}{2}\psi_{F''}(t,x).
\end{equation}
This identity is also valid at $t=0$, because for every $\epsilon>0$, by a Taylor expansion,
\begin{eqnarray*}
 && \frac{\psi_F(\epsilon,x)-\psi_F(0,x)}{\epsilon}\\&=&  \int_\R F'(x)\frac{y}{\sqrt{2\pi \epsilon}} e^{-y^2/2}dy + \frac{1}{2} \int_\R F''(x)\frac{y^2}{\sqrt{2\pi}} e^{-y^2/2}dy+R_x(\epsilon) \\
&=&  \frac{1}{2} F''(x)+ R_x(\epsilon)
\end{eqnarray*}
with remainder term
$$
R_x(\epsilon)= \int_\R  \int_0^1 (1-v)(F''(x+v\sqrt{\epsilon} y) -F''(x)) dv  \frac{y^2}{\sqrt{2\pi}} e^{-y^2/2}dy,
$$
and by dominated convergence $R_x(\epsilon)$ tends to zero, as $\epsilon$ goes to zero, for every $x\in \R$. 
In order to show the H\"older-type condition in the second line of \eqref{eq:reg_chain_rule}, we define 
$$
K:  \R \times [0,\lambda]\times [0,\lambda]\rightarrow \R_{\geq 0},\quad (x,t,s)\mapsto\left\{\begin{array}{cl} \frac{| \psi_{F'}(t,x)-\psi_{F'}(s,x)|}{|t-s|^{1/2}}, & t\neq s, \\ 0, & t=s. \end{array} \right.
$$
Then, by \eqref{eq:heat1},
$$
\left|\frac{\partial}{\partial x} \psi_F(t,x)-\frac{\partial}{\partial x} \psi_F(s,x)\right|=K(x,t,s) |t-s|^{1/2},
$$
and it suffices to show that $K$ is continuous. However,
\begin{eqnarray*}
 &&\psi_{F'}(t,x)-\psi_{F'}(s,x)\\ &=& \int_\R \int_{x+\sqrt{s}y}^{x+\sqrt{t}y} F''(r)dr  \frac{1}{\sqrt{2\pi }} e^{-y^2/2}dy 
= (\sqrt{t}-\sqrt{s}) \\ &&\times  \int_\R \int_{0}^{1} \left( F''(x+y(\sqrt{s}(1-v)+v\sqrt{t}))-F''(x)\right)dv  \frac{y}{\sqrt{2\pi }} e^{-y^2/2}dy.
\end{eqnarray*}
Thus, for $t\neq s$,
\begin{eqnarray*}
 && K(x,t,s) = \frac{|t-s|^{1/2}}{\sqrt{t}+\sqrt{s}}\\ &&\times  \left|\int_\R \int_{0}^{1} \left( F''(x+y(\sqrt{s}(1-v)+v\sqrt{t}))-F''(x)\right)dv  \frac{y}{\sqrt{2\pi }} e^{-y^2/2}dy\right|,
\end{eqnarray*}
which, by dominated convergence, implies that $K$ is continuous at every $(x_0,t_0,s_0)\in\R\times([0,\lambda]^2\setminus \{(0,0)\})$. In order to show continuity  
at $(x_0,0,0)$ for $x_0\in \R$, let $(x_n,t_n,s_n)$ be as sequence converging to  $(x_0,0,0)$. Then,
\begin{eqnarray*}
 && |K(x_n,t_n,s_n)| \\ &\leq& \int_\R \int_{0}^{1} \left| F''(x_n+y(\sqrt{s_n}(1-v)+v\sqrt{t_n}))-F''(x_n)\right|dv  \frac{|y|}{\sqrt{2\pi }} e^{-y^2/2}dy,
\end{eqnarray*}
which tends to zero by dominated convergence.

We can, thus, apply the chain rule in Theorem \ref{thm:chain_rule} to $\psi_F(V(t),\inth(t))$ for $h\in \mathcal{A}_{W^*_2}$, and obtain, in view of 
\eqref{eq:heat1}--\eqref{eq:heat2},
\begin{eqnarray*}
&& \psi_F(V(T),\inth(T))-\psi_F(V(0),\inth(0))\\ &=& \int_0^T \psi_{F'}(V(s),\inth(s)) \, d\inth (s)+ \frac{1}{2}
\int_0^T \psi_{F''}(V(s),\inth(s))\,dV(s) \\ &&+
 \sum_{s\in (0,T]}  \psi_F(V(s),\inth(s))- \psi_{F}(V(s-),\inth(s-))-  \psi_{F'}(V(s),\inth(s)) \Delta^-\inth(s)\\ &&\quad \quad-  \frac{1}{2} \psi_{F''}(V(s),\inth(s))\Delta^-V(s) \\ 
&&+
 \sum_{s\in [0,T)}  \psi_F(V(s+),\inth(s+))- \psi_F(V(s),\inth(s))-   \psi_{F'}(V(s),\inth(s)) \Delta^+\inth(s) \\ &&  \quad \quad -\frac{1}{2} \psi_{F''}(V(s),\inth(s))\Delta^+V(s),
\end{eqnarray*}
including the existence of the integral with respect to $\inth$ as Henstock-Kurzweil integral and the absolute convergence of the two sums. As $\inth$ and $V$ are continuous at $s$, if 
$X$ is stochastically continuous at $s$, the two sums can be restricted to  $(0,T]\cap D_X$ and $[0,T)\cap D_X$, respectively, without changing their values. Finally, each term 
in the above identity can be rewritten in terms of the $S$-transform by an application of Example \ref{ex:F(X)}, which yields the assertion.
\end{proof}

We are now in the position to prove the It\^o formula in Theorem \ref{thm:Ito}.

\begin{proof}[Proof of Theorem \ref{thm:Ito}]
 We first treat the term, which involves the jumps from the right. For $s\in D_X \cap [0,T)$, we consider
\begin{eqnarray*}
 J_s^+&:=&\psi_F(V(s+)-V^+(s), X_{s+})-F(X_s) -F'(X_s) \Delta^+X_s \\ && - \frac{1}{2} F''(X_s)(E[(\Delta^+X_s)^2]  +V(s+)-V^+(s)).
\end{eqnarray*}
Note that the subexponential growth condition \eqref{eq:growth} ensures that each $J_s^+$ belongs to $(L^2_X)$. In  order to compute the $S$-transform of $J^+_s$, we note that,
for every $h\in H_X$,
\begin{eqnarray}\label{eq:Stransform_derivative}
 && S(F'(X_s) \Delta^+X_s)(h)= E[ F'(X_s+\inth(s))( \Delta^+X_s+ \Delta^+ \inth(s))] \\
&=& E[F'(X_s+\inth(s)) X_s] E[X_s\Delta^+X_s]/V(s) + E[ F'(X_s+\inth(s))]\Delta^+\inth(s) \nonumber\\
&=& E[F''(X_s+\inth(s))]E[X_s\Delta^+X_s] + E[ F'(X_s+\inth(s))]\Delta^+\inth(s)\nonumber\\ &=& S(F''(X_s))(h)E[X_s\Delta^+X_s] + S(F'(X_s))(h)\Delta^+\inth(s). \nonumber
\end{eqnarray}
 Here, the first equality is due to \eqref{eq:shift}, the second one follows by projecting $\Delta^+X_s$ on $\{y X_s;\; y \in \R\}$, the third one by 
rewriting the expectation as integral with respect to the Gaussian density and by integration by parts.  Note that this well-known identity can alternatively be derived from the relation between 
Wick product and Malliavin derivative, see \citet[Proposition 1.3.4]{Nualart}.
As 
\begin{equation}\label{eq:hilf0003}
2E[X_s\Delta^+X_s]+E[(\Delta^+X_s)^2]  +V(s+)-V^+(s)=V(s+)-V(s)=\Delta^+V(s),
\end{equation}
we obtain, for every $h\in H_X$
\begin{eqnarray}\label{eq:Stransform_jumps}
S( J_s^+)(h)&=&S(\,\psi_F(V(s+)-V^+(s), X_{s+}))(h) -(S\,F(X_s))(h)  \\ &&-(S\,F'(X_s)) \Delta^+\inth(s) -
\frac{1}{2} (S\,F''(X_s)) \Delta^+V(s).\nonumber
\end{eqnarray}
We next show that the sum of $J^+_s$ converges absolutely in $(L^2_X)$ as $s$ runs through $D_X \cap [0,T)$.
To this end, we decompose $J_{s}^+= J_{s}^{+,1} - J_{s}^{+,2}$, where 
\begin{eqnarray*}
 J_s^{+,1}&:=&\psi_F(V(s+)-V^+(s), X_{s+})-F(X_s) -F'(X_s) \Delta^+X_s, \\
J_s^{+,2}&:=&\frac{1}{2} F''(X_s)(E[(\Delta^+X_s)^2]  +V(s+)-V^+(s)).
\end{eqnarray*}
Taylor's theorem yields for $s\in D_X \cap [0,T)$
\begin{eqnarray*}
&& J_s^{+,1}\\&=&\int_\R \Bigl(F(X_{s+}+\sqrt{V(s+)-V^+(s)}y) -F(X_s)  \\ && \quad \quad -F'(X_s) (\Delta^+X_s +\sqrt{V(s+)-V^+(s)}y)\Bigr)\frac{e^{-y^2/2}}{\sqrt{2\pi}} dy\\
&=&  \int_\R  \int_0^1  (\Delta^+X_s +\sqrt{V(s+)-V^+(s)}y)^2  \\ &&  \times (1-u) F''((1-u)X_s+u(X_{s+}+\sqrt{V(s+)-V^+(s)}y)) du \frac{e^{-y^2/2}}{\sqrt{2\pi}}  dy.
\end{eqnarray*}
We now define $\epsilon:=((4a\lambda)^{-1}-1)/2$, where $a$ is the constant in the subexponential growth condition, let $\epsilon^*:=1/\epsilon$, and abbreviate
$$
l(s,y,u):=(1-u)X_s+u(X_{s+}+\sqrt{V(s+)-V^+(s)}y).
$$
Then, by Jensen's inequality, Fubini's theorem, and H\"older's inequality,

\begin{eqnarray*}
&& E[|J_s^{+,1}|^2]^{1/2}\\ &\leq&  \left(\int_\R  \int_0^1  E[(\Delta^+X_s +\sqrt{V(s+)-V^+(s)}y)^4\, |F''(l(s,y,u))|^2] du \frac{e^{-y^2/2}}{\sqrt{2\pi}}  dy\right)^{1/2} \\
&\leq & \left(\int_\R  \int_0^1  E[|F''(l(s,y,u))|^{2(1+\epsilon)}] du \frac{e^{-y^2/2}}{\sqrt{2\pi}}  dy\right)^{\frac{1}{2(1+\epsilon)}}  \\
&&\times  \left(\int_\R   E[|\Delta^+X_s +\sqrt{V(s+)-V^+(s)}y|^{4(1+\epsilon^*)}] \frac{e^{-y^2/2}}{\sqrt{2\pi}}  dy\right)^{\frac{1}{2(1+\epsilon^*)}} 
\end{eqnarray*}
The second factor is the square of the $L^{4(1+\epsilon^*)}$-norm of a centered Gaussian random variable with variance $E[(\Delta^+ X_s)^2]+V(s+)-V^+(s)$. Hence, there is a constant $d_{\epsilon^*}$ 
such that
\begin{eqnarray*}
&& \Bigl(\int_{\R} E\left[|\Delta^+X_s +\sqrt{V(s+)-V^+(s)}y|^{4(1+\epsilon^*)}\right]  \frac{e^{-y^2/2}}{\sqrt{2\pi}}dy\Bigr)^{\frac{1}{2(1+\epsilon^*)}} \\ &\leq& 
d_{\epsilon^*}(E[(\Delta^+ X_s)^2]+V(s+)-V^+(s)).
\end{eqnarray*}
The first factor is bounded by a constant $K_{\epsilon}$ independent of  $s$ by the subexponential growth condition. Indeed, by convexity,
\begin{eqnarray*}
&&  \int_\R  \int_0^1  E[|F''(l(s,y,u))|^{2(1+\epsilon)}] du \frac{e^{-y^2/2}}{\sqrt{2\pi}}  dy \\ &\leq & C^{2(1+\epsilon)} \int_\R  \max_{u\in \{0,1\}} E\left[e^{2(1+\epsilon)a|l(s,y,u)|^2}\right] \frac{e^{-y^2/2}}{\sqrt{2\pi}}  dy
\\ &\leq &  C^{2(1+\epsilon)} \Bigl(  E\left[e^{2(1+\epsilon)a|X_s|^2}\right] \\ &&+ \int_\R E\left[e^{2(1+\epsilon)a|X_{s+}+\sqrt{V(s+)-V^+(s)}y|^2}\right]\frac{e^{-y^2/2}}{\sqrt{2\pi}}  dy \Bigr)
\\ &\leq &  C^{2(1+\epsilon)} \sup_{t\in[0,T)}\int_\R  \left(e^{2(1+\epsilon)aV(t) z^2}+ e^{2(1+\epsilon)aV(t+) z^2}\right) \frac{e^{-z^2/2}}{\sqrt{2\pi}}  dz \\ &=:&K_\epsilon^{2(1+\epsilon)}<\infty.
\end{eqnarray*}
Thus,
\begin{equation} \label{eq:ito_jumps1}
 E[|J_s^{+,1}|^2]^{1/2} \leq K_{\epsilon} d_{\epsilon^*}(E[(\Delta^+ X_s)^2]+V(s+)-V^+(s)).
\end{equation}
Moreover, we clearly observe that 
\begin{equation} \label{eq:ito_jumps2}
 E[|J_s^{+,2}|^2]^{1/2} \leq \frac{1}{2} \left(\sup_{t\in [0,T]} E[|F''(X_t)|]^{1/2}\right)\, (E[(\Delta^+X_s)^2]  +V(s+)-V^+(s)).
\end{equation}
We can now deduce from \eqref{eq:ito_jumps1}, \eqref{eq:ito_jumps2},  and \eqref{eq:ito_ass} that the sum 
$
\sum_{s\in  D_X \cap [0,T)} J_{s}^+
$
converges absolutely  in $(L^2_X)$. In particular,
$$
S\left(\sum_{s\in  D_X \cap [0,T)}^\infty  J_{s}^+\right)(h)=\sum_{s\in  D_X \cap [0,T)} S( J_s^+)(h),\quad h\in \mathcal{A}_{W^*_2},
$$
Let us summarize the foregoing: The sum 
\begin{eqnarray}\label{eq:jumps_right1}
 && \sum_{s\in D_X \cap [0,T)} \Bigl(\psi_F(V(s+)-V^+(s), X_{s+})-F(X_s) -F'(X_s) \Delta^+X_s  \\
&& \quad\quad\quad\quad\quad - \frac{1}{2} F''(X_s)(E[(\Delta^+X_s)^2] +V(s+)-V^+(s))\Bigr) \nonumber
\end{eqnarray}
converges absolutely in $(L^2_X)$ and, due to \eqref{eq:Stransform_jumps}, its $S$-transform is given by 
\begin{eqnarray}\label{eq:jumps_right2}
  && \sum_{s\in D_X \cap [0,T)} \Bigl(S(\,\psi_F(V(s+)-V^+(s), X_{s+}))(h) -(S\,F(X_s))(h) \\ && \quad\quad-(S\,F'(X_s)) \Delta^+\inth(s) -
\frac{1}{2} (S\,F''(X_s)) \Delta^+V(s) \Bigr)  \nonumber
\end{eqnarray}
for $h\in \mathcal{A}_{W^*_2}$. The jumps from the left can be treated in the same way. The only difference is that we apply
$$
2E[X_s\Delta^-X_s]-E[(\Delta^-X_s)^2]  -(V(s-)-V^-(s))=V(s)-V(s-)=\Delta^-V(s)
$$
instead of \eqref{eq:hilf0003}, which explains the change of sign in front of the second derivative term compared to the corresponding term  resulting in the case of the jumps from the right. 
We finally obtain that
\begin{eqnarray}\label{eq:jumps_left1}
&&  \sum_{s\in D_X \cap (0,T]} \Bigl( F(X_s)- \psi_F(V(s-)-V^-(s), X_{s-})-F'(X_s) \Delta^-X_s \\
&& \quad\quad\quad\quad\quad + \frac{1}{2} F''(X_s)(E[(\Delta^-X_s)^2] +V(s-)-V^-(s))\Bigr)  \nonumber
\end{eqnarray}
converges absolutely in $(L^2_X)$ and its $S$-transform is given by 
\begin{eqnarray}\label{eq:jumps_left2}
 &&   \sum_{s\in D_X \cap (0,T]} \Bigl((S\,F(X_s))(h)- S(\,\psi_F(V(s-)-V^-(s), X_{s-}))(h) \\ && \quad\quad-(S\,F'(X_s)) \Delta^-\inth(s) -
\frac{1}{2} (S\,F''(X_s)) \Delta^-V(s) \Bigr) \nonumber
\end{eqnarray}
for $h\in \mathcal{A}_{W^*_2}$. 

We next discuss the integral with respect to the variance $V$. The subexponential growth condition \eqref{eq:growth} again ensures that 
$$
\int_0^T  E[|F''(X_s)|^2] d|V|(s)<\infty,
$$  
where $|V|$ denotes the total variation of $V$. Thus, by Fubini's theorem and H\"older's inequality, 
$$
\int_0^T  F''(X_s) dV(s) \in (L^2_X).
$$
Another application of Fubini's theorem then yields
\begin{equation}\label{eq:FV}
 S\left(\int_0^T  F''(X_s) dV(s) \right)(h)= \int_0^T  (S\,F''(X_s))(h)\, dV(s),
\end{equation}
for every $h\in \mathcal{A}_{W^*_2}$.

We can finally combine \eqref{eq:jumps_right1}--\eqref{eq:FV} with Proposition \ref{prop:ito} in order to show that the $S$-transform of 
\begin{eqnarray*}
 && F(X_T)-F(X_0)- \frac{1}{2} \int_0^T F''(X_s) dV(s) \\
& -&\sum_{s\in D_X \cap (0,T]}  \Bigl(F(X_s)- \psi_F(V(s-)-V^-(s), X_{s-})-F'(X_s) \Delta^-X_s \\
&& \quad\quad\quad\quad\quad + \frac{1}{2} F''(X_s)(E[(\Delta^-X_s)^2] +V(s-)-V^-(s))\Bigr) \\ 
 &-& \sum_{s\in D_X \cap [0,T)} \Bigl( \psi_F(V(s+)-V^+(s), X_{s+})-F(X_s) -F'(X_s) \Delta^+X_s \\
&& \quad\quad\quad\quad\quad - \frac{1}{2} F''(X_s)(E[(\Delta^+X_s)^2] +V(s+)-V^+(s))\Bigr)
\end{eqnarray*}
at every $h\in \mathcal{A}_{W^*_2}$ is given by the Henstock-Kurzweil integral
$$
\int_0^T (S\, F'(X_s))(h)\,d\inth(s).
$$
Hence, the Wick-Skorokhod integral  $\int_0^T F'(X_s) d^\diamond X_s$ exists and the asserted It\^o formula is valid.
\end{proof}

We close this section with  a simplified version of the It\^o formula in Theorem \ref{thm:Ito} as announced in \eqref{eq:Ito_rcll}. To this end, we assume that $X$ is \emph{stochastically RCLL}, i.e. 
for every $t\in [0,T)$ and every sequence $(t_n)$ converging to $t$ from the right, $X_{t_n}$ converges to $X_t$ in probability, and, moreover: For every 
$t\in (0,T]$ there is a random variable $X_{t-}\in (L^2_X)$ such that for every    sequence $(t_n)$ converging to $t$ from the left, $X_{t_n}$ converges to $X_{t-}$ in probability.
By Gaussianity, both limits also hold strongly in $(L^2_X)$. In particular, $X$ is weakly regulated with $X_{t+}=X_t$.
\begin{cor}\label{cor:ito}
 Suppose the centered Gaussian process $X$ satisfies the following assumptions: $X$ is stochastically RCLL, $\mathcal{A}_{W^*_2}$ is dense in $H_X$, the variance function $V$ 
of $X$ is of bounded variation, and
\begin{equation*}
\sum_{s\in D_X}  E[(\Delta^- X_s)^2]<\infty.
\end{equation*}
Assume $F\in \mathcal{C}^2(\mathbb{R})$ and $F,F',F''$ satisfy the growth condition \eqref{eq:growth} with $\lambda=\sup_{t\in [0,T]} V(t)$. 
Then, $\int_{0+}^T F'(X_{s-}) d^\diamond X_s$ exists and the following It\^o formula holds in $(L^2_X)$:
\begin{eqnarray*}
 F(X_T)&=&F(X_0)+\int_{0+}^T F'(X_{s-}) d^\diamond X_s + \frac{1}{2} \int_{0+}^T F''(X_{s-}) dV^c(s) \\
&& + \sum_{s\in D_X \cap (0,T]} \Bigl( F(X_s)- F(X_{s-})-F'(X_{s-}) \Delta^-X_s  \\ && \quad\quad+  F''(X_{s-})E[X_{s-}(\Delta^-X_s)]\Bigr). 
 \end{eqnarray*}
Here, $V^c$ denotes the continuous part of $V$, the set $D_X$ of stochastic discontinuities is at most countable and the sum converges absolutely in $(L^2_X)$.
\end{cor}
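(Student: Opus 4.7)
The plan is to derive this directly from Theorem \ref{thm:Ito} by simplifying under the stronger stochastic RCLL hypothesis. As noted immediately before the corollary, stochastic RCLL together with Gaussianity forces the weak $(L^2_X)$-limits $X_{s\pm}$ to be strong $(L^2_X)$-limits with $X_{s+}=X_s$ and $V^\pm(s)=V(s\pm)$ for every $s$. Consequently $\Delta^+X_s=0$ and $V(s+)-V^+(s)=V(s-)-V^-(s)=0$, so hypothesis \eqref{eq:ito_ass} of Theorem \ref{thm:Ito} collapses to $\sum_{s\in D_X}E[(\Delta^-X_s)^2]<\infty$, exactly as assumed. Using $\psi_F(0,x)=F(x)$, every right-jump summand in the Theorem's formula vanishes term by term, while each left-jump summand simplifies to $F(X_s)-F(X_{s-})-F'(X_s)\Delta^-X_s+\tfrac{1}{2}F''(X_s)E[(\Delta^-X_s)^2]$.

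Next I would decompose $V=V^c+V^d$ with $V^d(t)=\sum_{s\in D_X\cap(0,t]}\Delta^-V(s)$ (no right jumps of $V$ under stochastic RCLL). Since $V^c$ is continuous, $D_X$ is countable, and $X_s=X_{s-}$ in $(L^2_X)$ off $D_X$, the continuous part of $\tfrac{1}{2}\int_0^T F''(X_s)\,dV(s)$ rewrites as $\tfrac{1}{2}\int_{0+}^T F''(X_{s-})\,dV^c(s)$. The identity $\Delta^-V(s)=2E[X_{s-}\Delta^-X_s]+E[(\Delta^-X_s)^2]$ then lets me absorb $\tfrac{1}{2}F''(X_s)\Delta^-V(s)$ together with the residual $\tfrac{1}{2}F''(X_s)E[(\Delta^-X_s)^2]$ from the jump term into a single contribution $F''(X_s)E[X_s\Delta^-X_s]$.

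At this point the Theorem's formula reads as in the corollary, except that $F'(X_s), F''(X_s), E[X_s\Delta^-X_s]$ appear in place of $F'(X_{s-}), F''(X_{s-}), E[X_{s-}\Delta^-X_s]$ inside the jump sum, and $\int F'(X_s)\,d^\diamond X_s$ appears in place of $\int_{0+}^T F'(X_{s-})\,d^\diamond X_s$. I would verify equivalence of the two versions by comparing $S$-transforms at $h\in\mathcal{W}_2^*$. A left-endpoint analogue of \eqref{eq:Stransform_derivative}, proved by the same Gaussian change-of-variable and integration-by-parts argument, reads
\[
S(F'(X_{s-})\Delta^-X_s)(h)=S(F''(X_{s-}))(h)\,E[X_{s-}\Delta^-X_s]+S(F'(X_{s-}))(h)\,\Delta^-\inth(s),
\]
and together with its $X_s$-analogue reduces the $S$-transform discrepancy between the two jump sums to $\sum_{s\in D_X}(S(F'(X_s))(h)-S(F'(X_{s-}))(h))\,\Delta^-\inth(s)$. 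Since this deterministic function of $s$ vanishes off the countable jump set of the RCLL function $\inth$, basic Henstock--Kurzweil theory identifies the sum with $\int_0^T(S(F'(X_s))(h)-S(F'(X_{s-}))(h))\,d\inth(s)$, which is precisely the $S$-transform correction converting $\int F'(X_s)\,d^\diamond X_s$ into $\int_{0+}^T F'(X_{s-})\,d^\diamond X_s$.

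The main obstacle will be upgrading this $S$-transform identification to existence of $\int_{0+}^T F'(X_{s-})\,d^\diamond X_s$ as a bona fide element of $(L^2_X)$. This boils down to unconditional $(L^2_X)$-convergence of the discrepancy series
\[
\sum_{s\in D_X}\bigl\{(F'(X_s)-F'(X_{s-}))\Delta^-X_s-(F''(X_s)E[X_s\Delta^-X_s]-F''(X_{s-})E[X_{s-}\Delta^-X_s])\bigr\},
\]
which I would handle exactly as in \eqref{eq:ito_jumps1}--\eqref{eq:ito_jumps2} of the proof of Theorem \ref{thm:Ito}: a Taylor expansion $F'(X_s)-F'(X_{s-})=F''(\xi_s)\Delta^-X_s$, combined with the subexponential growth of $F''$ and H\"older's inequality, bounds each term of the series in $(L^2_X)$-norm by a constant multiple of $E[(\Delta^-X_s)^2]$, giving absolute convergence under the assumed jump summability.
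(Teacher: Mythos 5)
Your proposal is correct and follows essentially the same route as the paper's proof: reduce to Theorem \ref{thm:Ito} using $\Delta^+X_s=0$ and $V^\pm(s)=V(s\pm)$, split $V$ into $V^c+V^d$, absorb the jump of $V$ via $\tfrac12(\Delta^-V(s)+E[(\Delta^-X_s)^2])=E[X_s\Delta^-X_s]$, and then trade $X_s$ for $X_{s-}$ in the jump sum and in the integrand by matching $S$-transforms through the left-limit analogue of \eqref{eq:Stransform_derivative}, with the discrepancy series controlled by a Taylor/growth argument as in \eqref{eq:ito_jumps1}--\eqref{eq:ito_jumps2}. The only step you leave implicit is the verification of the separability part of condition (H) (the corollary assumes only that $\mathcal{W}^*_2$ is dense in $H_X$), which the paper settles in one line by noting that stochastic right-continuity makes the rational span of $\{X_t;\ t\in([0,T)\cap\mathbb{Q})\cup\{T\}\}$ dense in $H_X$.
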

Note that $E[X_{s-}(\Delta^-X_s)]=0$, if $X$ is martingale. Hence, in view of Theorem \ref{thm:relation_ito}, Corollary \ref{cor:ito} contains the classical It\^o formula 
\eqref{eq:Ito_mart} for Gaussian martingales as special case. (Recall that the pathwise jumps of a Gaussian martingale occur only at the deterministic times of stochastic 
discontinuities).
\begin{proof}[Proof of Corollary \ref{cor:ito}]
 As $X$ is stochastically right-continuous, the rational span of $$\{X_t;\;t\in ([0,T)\cap \mathbb{Q})\cup \{T\}\}$$ is dense in $H_X$. Hence, condition (H) holds for $X$. Moreover, 
$V^{\pm }(t)=V(t\pm)$, because $X$ is stochastically RCLL, as already observed at the end of Example \ref{ex:F(X)}. Finally, $\Delta^+ X_{t}=0$ for every $t\in [0,T]$. 
Consequently, Theorem \ref{thm:Ito} is applicable and simplifies in the following way:
\begin{eqnarray*}
 && F(X_T)-F(X_0) \\ &=&\int_0^T F'(X_s) d^\diamond X_s + \frac{1}{2} \int_0^T F''(X_s) dV(s) \\
&& + \sum_{s\in D_X \cap (0,T]}  F(X_s)- F( X_{s-})-F'(X_s) \Delta^-X_s +\frac{1}{2} F''(X_s)E[(\Delta^-X_s)^2].
\end{eqnarray*}
As $V$ is  right-continuous, $\mu_V$ has no atom at $0$. Hence,
\begin{eqnarray*}
&& \frac{1}{2} \int_0^T F''(X_s) dV(s)=\frac{1}{2} \int_{0+}^T F''(X_s) dV(s)\\ &=& \frac{1}{2} \int_{0+}^T F''(X_{s-}) dV^c(s) + \frac{1}{2}\sum_{s\in  D_X \cap (0,T]} F''(X_s) \Delta^- V(s).
\end{eqnarray*}
Noting that $\frac{1}{2}(E[(\Delta^-X_s)^2]+\Delta^- V(s))=E[X_s\Delta^-X_s]$, we obtain 
\begin{eqnarray*}
 && F(X_T)-F(X_0) - \frac{1}{2} \int_{0+}^T F''(X_{s-}) dV^c(s) \\
&& - \sum_{s\in D_X \cap (0,T]} \Bigl( F(X_s)- F(X_{s-})-F'(X_{s-}) \Delta^-X_s \\ &&\quad\quad+  F''(X_{s-})E[X_{s-}(\Delta^-X_s)]\Bigr) \\ &=&
\int_0^T F'(X_s) d^\diamond X_s  -  \sum_{s\in D_X \cap (0,T]} \Bigl((F'(X_s)-F'(X_{s-})) \Delta^-X_s \\ && \quad \quad- (F''(X_s)E[X_{s}\Delta^-X_s]-F''(X_{s-})E[X_{s-}\Delta^-X_s])\Bigr),
\end{eqnarray*}
where, by the local Lipschitz continuity of $F'$ and the growth condition on $F''$, the sum on the right-hand side can be seen to converge absolutely in $(L^2_X)$. We now compute the 
$S$-transform of the right-hand side at $h\in \mathcal{A}_{W^*_2}$. Applying the right-continuity of $\inth$ and the analogue of \eqref{eq:Stransform_derivative}, we observe that it is given by 
\begin{eqnarray*}
 && \int_{0+}^T (S\, F'(X_s))(h)\,d\inth(s)-\sum_{s\in D_X \cap (0,T]}  S(F'(X_s)-F'(X_{s-}))(h) \Delta^- \inth(s) \\
&=& \int_{0+}^T (S\, F'(X_{s-}))(h)\,d\inth(s).
\end{eqnarray*}
Again, we may conclude that the Wick-Skorokhod integral $\int_{0+}^T F'(X_{s-}) d^\diamond X_s$ exists and that the asserted It\^o formula holds.
\end{proof}

\section{Subordinated fractional Brownian motion}\label{sec:fBm}

In this section, we consider processes of the form $X_t=B^H_{A_t}$ where $B^H$ is a fractional Brownian motion with Hurst parameter $H\in (0,1)$ and $A$ is a L\'evy subordinator independent of $B^H$, see \cite{Sa}, pp. 137--142, for background information on 
L\'evy subordinators. The resulting process 
$X$ has stationary increments and features jumps (unless the subordinator is deterministic, which we rule out for the remainder of the section).
If the subordinator $A$ is a Gamma process, then $B^H_{A_t}$ is known as \emph{fractional Laplace motion} and has been applied, e.g., as a model for hydraulic conductivity \citep{Mal} and sediment transport \citep{Gal}. Subordinated fractional 
Brownian motion has also been proposed as a risk process in actuarial mathematics, see \cite{DHJ}.

 Apparently, subordinated fractional Brownian motion $X_t=B^H_{A_t}$ itself is not a Gaussian process,
but $X$ becomes Gaussian after conditioning on the path of the subordinator. This fact can be used to extend our integration theory to subordinated fractional Brownian motion and to derive an It\^o formula for this class of processes. 
 Of course, this reasoning applies to a much wider class of conditionally Gaussian processes. 

Exploiting the independence of $B^H$ and $A$, the obvious generalization of the Wick-Skorokhod integral is the following:  
We first condition on the paths of the subordinator $A$, then integrate in the sense of Gaussian Wick-Skorokhod integration, and finally evaluate the integral along the paths of the subordinator.
This formally leads to the definition
$$
\int_0^T z(s,X) \, d^\diamond X_s = \left( \int_0^T z(s,X^a) \, d^\diamond X^a_s \right)_{|a=A}
$$
where $X^a_s:=B^H_{a(s)}$ for a deterministic nondecreasing function $a$ with right-continuous paths.

In order to define the integral rigorously, we denote by $\mathbb{D}([0,T])$ the space of RCLL real-valued functions on $[0,T]$ and equip it with the Borel-$\sigma$-field (generated by the Skorokhod topology). For every RCLL nondecreasing function $a:[0,T]\rightarrow \R$,
 we consider the Gaussian process $X^a_t=B^H_{a(t)}$, $t\in [0,T]$. Since,
\begin{equation}\label{eq:CVfBm}
Cov(X^a_t, X^a_s)=\frac{1}{2}(a(t)^{2H}+a(s)^{2H}-|a(t)-a(s)|^{2H} )
\end{equation}
 (cp. Example \ref{exmp:Skorokhod}), we observe that the covariance function is, for fixed $s$, a bounded variation function in $t$. In particular, each of the processes $X^a$ satisfies condition $(H)$. 
\begin{defi}
 Suppose that  $Z_t=z(t,X)$ for a measurable function $z:[0,T]\times \mathbb{D}([0,T])\rightarrow \R$ and $\mathbb{A}$ is a set of RCLL nondecreasing functions such that $P(\{A\in \mathbb{A}\})=1$. If $\int_0^T z(s, X^a) \,d^\diamond X^a_s$ exists for every $a\in \mathbb{A}$ and there is a measurable 
function $I_z:  \mathbb{D}([0,T]) \times \mathbb{D}([0,T])\rightarrow \R$ such that $\int_0^T z(s, X^a) \,d^\diamond X^a_s=I_z(X^a, a)$ for every $a\in \mathbb{A}$, then we define 
$$
\int_0^T Z_s \, d^\diamond X_s:= I_z(X,A).
$$
\end{defi}
A routine application of Fubini's theorem shows that the integral is (in case of existence) uniquely defined up to modification.

We can now state an It\^o formula for subordinated fractional Brownian motion in our framework. In the statement of the formula, we apply that any L\'evy subordinator can be represented 
as
$$
A_t=\sigma t+ \sum_{0<s\leq t} (\Delta A_s)
$$
for some constant $\sigma\geq 0$ and has only positive jumps, see \cite{Sa}, Theorem 21.5. We denote the L\'evy measure of $A$ by $\nu$. 
\begin{thm} \label{thm:subordinated}
 Suppose $H\geq 1/2$ or   $\int_{0+}^1 x^{2H} \nu(dx)<\infty$. If $F\in \mathcal{C}^2(\R)$ and $F$, $F'$, and $F''$ are bounded, then 
$\int_{0+}^T F'(X_{s-}) d^\diamond X_s$ exists and the following It\^o formula holds $P$-almost surely:
\begin{eqnarray*}
 F(X_T)&=&F(0)+\int_{0+}^T F'(X_{s-}) d^\diamond X_s + \sigma H \int_{0+}^T F''(X_{s-}) A_s^{2H-1} ds \\
&& + \sum_{s\in (0,T]} \Bigl( F(X_s)- F(X_{s-})-F'(X_{s-}) \Delta X_s  \\ && \quad\quad+ \frac{1}{2} F''(X_{s-})(A_s^{2H}-A_{s-}^{2H}-(\Delta A_s)^{2H})\Bigr). 
 \end{eqnarray*}
Here, the sum runs pathwise over the set of jump times of $A$ (which includes the jump times of $X$) and is absolutely convergent $P$-almost surely. Morever, $\sigma$ is the constant slope of the continuous part of $A$, and
the term $\sigma H \int_{0+}^T F''(X_{s-}) A_s^{2H-1} ds$ is to be read as zero, if $\sigma=0$.
\end{thm}

In the Brownian motion case $H=1/2$, the processes of the form $X_t=B^{1/2}_{A_t}$ are L\'evy processes and, in particular, include the class of CGMY-processes which were introduced in financial modeling 
in \cite{CGMY} and are among the most popular stock price models with jumps. As expected,
 our It\^o formula takes the same form as the classical one for L\'evy processes, if $H=1/2$. We again emphasize that the covariance function of fractional Brownian motion is visible in the additional 
 second derivative term of the jump component. 

\begin{proof}
 We first define $\mathbb{A}$ as the set of nondecreasing RCLL functions on $[0,T]$ of the form $a(t)=\sigma t+ \sum_{0<s\leq t} (\Delta a(s))$ such that $\sum_{0<s\leq T} (\Delta a(s))^{2H}<\infty$. We claim that $P(\{A\in  \mathbb{A} \})=1$. To this end, we
decompose
\begin{equation}\label{eq:hilfr01}
\sum_{0<s\leq T} (\Delta A_s)^{2H} = \sum_{0<s\leq T} (\Delta A_s)^{2H}{\bf 1}_{\{0<\Delta A_s\leq 1\}}  +\sum_{0<s\leq T} (\Delta A_s)^{2H}{\bf 1}_{\{\Delta A_s>1\}}.
\end{equation}
As $A_t$ has only finitely many jumps of size larger than one, the second term is finite $P$-almost surely. The first term is nonnegative and satisfies 
$$
E\left[\sum_{0<s\leq T} (\Delta A_s)^{2H}{\bf 1}_{\{0<\Delta A_s\leq 1\}} \right]= T\int_{0+}^1 x^{2H} \nu(dx)
$$
by the definition of the L\'evy measure. The right-hand side is finite for $H<1/2$ by assumption and for $H\geq1/2$, because any L\'evy subordinator fulfills 
$$
\int_{0+}^1 x \nu(dx)< \infty,
$$
see \cite{Sa}, Theorem 21.5. Hence, the first term in \eqref{eq:hilfr01} is also finite $P$-almost surely. 

We next consider the set $\mathbb{A}'$ of RCLL functions such that $\sum_{0<s\leq T} (\Delta x(s))^{2}<\infty$, and define the functional
\begin{eqnarray*}
 I(x,a)&:=&{\bf 1}_{\{a\in \mathbb{A}\}\cap\{x\in \mathbb{A}' \}}\Biggl( 
F(x(T))-F(0)- \sigma H \int_{0+}^T F''(x({s-})) a(s)^{2H-1} ds \\
&& - \sum_{s\in (0,T]} \Bigl( F(x(s))- F(x({s-}))-F'(x({s-})) \Delta x(s)  \\ && \quad\quad+ \frac{1}{2} F''(x({s-}))(a(s)^{2H}-a({s-})^{2H}-(\Delta a(s))^{2H})\Bigr)\Biggr).
\end{eqnarray*}
Note that the assumptions on $F$ ensure that the sum over the jumps and the integral (if $\sigma>0$) converge. We emphasize that $P(\{X^a\in \mathbb{A}'\})=1$ for every $a\in \mathbb{A}$, and 
 $P(\{X\in \mathbb{A}'\})=1$. The first identity holds, because
$$
E\left[\sum_{s\in (0,T]} (\Delta X^a_s)^2 \right]=  \sum_{s;\; \Delta a(s)\neq 0} E[|B^H_{a(s)}-B^H_{a(s-)}|^2]= \sum_{s\in (0,T]} (\Delta a(s))^{2H}<\infty.
$$
The second identity is a consequence of Fubini's theorem and the first one, since
$$
P(\{X\in \mathbb{A}'\})=\int_{\mathbb{A}} P(\{X^a\in \mathbb{A}'\}) P_A(da) =1.
$$
It, thus, remains to show that, for every $a\in \mathbb{A}$,
\begin{eqnarray*}
\int_0^T F'(X^a_{s-})\,d^\diamond X^a_s&=& 
F(X^a_{T})-F(0)- \sigma H \int_{0+}^T F''(X^a_{s-}) a(s)^{2H-1} ds \\
&& - \sum_{s\in (0,T]} \Bigl( F(X^a_{s})- F(X^a_{s-})-F'(X^a_{s-}) \Delta X^a_s  \\ && \quad + \frac{1}{2} F''(X^a_{s-})(a(s)^{2H}-a({s-})^{2H}-(\Delta a(s))^{2H})\Bigr).
\end{eqnarray*}
This equation can easily be reduced to Corollary \ref{cor:ito}. To this end we note, that by continuity of $B^H$, $D_{X^a}=\{s;\; \Delta a(s)\neq 0\}\supset \{s;\; \Delta X^a_s \neq 0\}$. Hence, the (pathwise) sum 
over $s \in (0,T]$ can be replaced by the sum over $s\in D_{X^a}\cap (0,T]$. Moreover, by \eqref{eq:CVfBm},
$$
E[X^a_{s-}(\Delta^-X^a_s)]= E[X^a_{s-}(\Delta X^a_s)]=\frac{1}{2} (a(s)^{2H}-a({s-})^{2H}-(\Delta a(s))^{2H}),
$$
and so the sum over the jumps coincides with the corresponding sum in Corollary \ref{cor:ito}. We finally note that the variance of $X^a$ is given by $V_a(t)=a(t)^{2H}$. Hence, by the chain rule in Theorem 
\ref{thm:chain_rule}, the continuous part of $V_a$ is zero, if $\sigma=0$, and otherwise equals
$$
V^c_a(t)=2H \sigma  \int_{0}^t a(s)^{2H-1}  ds.
$$
Hence, 
$$
\sigma H \int_{0+}^T F''(X^a_{s-}) a(s)^{2H-1} ds=\frac{1}{2} \int_{0+}^T F''(X^a_{s-})  dV^c_a(s),
$$
which finishes the reduction to Corollary \ref{cor:ito} and the proof.
\end{proof}

\begin{exmp}
 (i) One of the best studied subordinated fractional Brownian motion is fractional Laplace motion, where the subordinator is a Gamma process $A$, i.e. a pure jump L\'evy process whose L\'evy measure has the density 
$$
\rho(x)={\bf 1}_{(0,\infty]}(x) c x^{-1}e^{-\lambda x}
$$
with respect to the Lebesgue measure, for positive constants $c$ and $\lambda$. The integrability condition on the L\'evy measure imposed in Theorem \ref{thm:subordinated} 
holds for every $H\in (0,1)$, and so our It\^o formula applies to the full range of Hurst parameters. Fractional Laplace motion 
has heavier tails than a normal distribution and features a stochastic self-similarity property. It has long-range dependence if and only if $H>1/2$, and its distribution is infinite divisible, if and only if $H\geq 1/2$. For a proof of these properties 
we refer to \cite{KMP}. \\[0.1cm]
(ii) If $A$ is an inverse Gaussian subordinator, i.e. a pure jump L\'evy process whose L\'evy measure has the density
$$
\rho(x)={\bf 1}_{(0,\infty]}(x) c x^{-3/2}e^{-\lambda x}
$$
with respect to the Lebesgue measure ($c,\lambda>0$), then $X_t=B^H_{A_{t}}$ is known as \emph{fractional inverse Gaussian process}.
It  can be obtained as scaling limit of  continuous time random walks with correlated jumps 
\citep{KMV}, has long-range dependence for $H>1/2$ \citep{KV}, and fails to have an infinite divisible distribution for $H<1/2$ \citep{Wal}. The integrability condition on the L\'evy measure imposed in Theorem \ref{thm:subordinated} 
holds for $H>1/4$, and so our It\^o formula applies to this range of Hurst parameters.
\end{exmp}

\section{Regularity in the Cameron-Martin space}\label{sec_reg}

In this section, we study the regularity of the elements of the Cameron-Martin space of $X$ as required in condition (H). We provide sufficient conditions in terms of the mixed planar variation of the covariance function,
the quadratic variation of $X$, and finally consider the case, when $X$ has an integral representation with respect to a Gaussian martingale.

The first criterion is formulated in terms of the mixed planar variation and is taken from \citet{Fal}.
\begin{thm}\label{thm:PV}
 Suppose that the covariance function of $X$ is of finite $(1,p)$-planar variation for some $p\geq 1$ in the following sense: There is a constant $K>0$ such that for every 
partition $\pi=\{0=t_0<t_1<\cdots<t_n=T\}$
$$
\sum_{j=1}^n \left(\sum_{i=1}^n  |E[(X_{t_i}-X_{t_{i-1}})(X_{t_j}-X_{t_{j-1}})]|\right)^p\leq K.
$$
Then, $X$ is $W^*_2$-regular.
\end{thm}
\begin{proof}
 By the proof of Theorem 1.1 in \cite{Fal}, every element of the Cameron-Martin space of $X$ then belongs to $W_q$ for $q=2p/(p+1)<2$. (Note that continuity of the paths of $X$ is assumed in 
\cite{Fal}, but the proof of their Theorem 1.1 does not make use of this property). Hence, by Remark \ref{rem:Wp} (i), $X$ is $W^*_2$-regular.
\end{proof}
Theorem 2.2 in \citet{Fal} provides sufficient conditions for the planar $(1,p)$-planar variation to be finite, if $X$ is stochastically continuous. See also Examples 2.4--2.16 in the same reference. 
In particular, we may derive from their Example 2.12 that bifractional Brownian motion is $W^*_2$-regular for every choice of the parameters $H\in (0,1)$, $K\in (0,1]$.

The next theorem states that an RCLL centered Gaussian process is $W^*_2$-regular, if the continuous part of the quadratic variation is deterministic and it has fixed discontinuities only. 
Here is the precise statement. 
\begin{thm}\label{thm:QV}
 Suppose $X$ has RCLL paths and that  jumps of the paths of $X$ only take place at the times of stochastic discontinuities $s\in D_X$. 
If 
$$
\sum_{s\in D_X\cap (0,T]} E[(\Delta X_s)^2] <\infty
$$
 and there is a deterministic function $v:[0,T]\rightarrow \R$ such that for every $t\in [0,T]$
$$
\sum_{i=1}^n (X_{t_i\wedge t}-X_{t_{i-1}\wedge t})^2 \rightarrow v(t) + \sum_{s\in (0,t] } (\Delta X_s)^2
$$
in probability as the mesh size of the partition $\pi=\{0=t_0<t_1<\cdots<t_n=T\}$ tends to zero, then $X$ is $W^*_2$-regular.
\end{thm}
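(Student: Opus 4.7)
The plan is to extend the duality argument behind Proposition \ref{prop:QV} and identify the limit of $\sum_i (\inth(t_i)-\inth(t_{i-1}))^2$ with $\sigma_2(\inth)$, exploiting the splitting of the quadratic variation limit into its deterministic continuous part $v(t)$ and its pure-jump part $\sum_s(\Delta X_s)^2$. Fix $h\in H_X$. I need to verify that (i) $\inth$ is regulated, (ii) $\sigma_2(\inth)<\infty$, and (iii) the refinement condition in the definition of $W^*_2$. Point (i) is standard: the a.s.\ RCLL paths of $X$, combined with Fernique's theorem applied to $\sup_{t\in[0,T]}|X_t|$, imply that $X$ is RCLL in $(L^2_X)$, so $\inth=E[X_\cdot\,h]$ is RCLL with $\Delta^+\inth\equiv 0$ and $\Delta^-\inth(s)=E[(\Delta X_s)\,h]$.

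The cornerstone for (ii) and (iii) is the Wick-product identity used in the proof of Proposition \ref{prop:QV}:
\begin{equation*}
\sum_{i=1}^n (\inth(t_i)-\inth(t_{i-1}))^2 = E\!\left[\exp^\diamond(h)\,Q_\pi\right],
\end{equation*}
where $Q_\pi := \sum_{i=1}^n P_2(X_{t_i}-X_{t_{i-1}})$. Writing $S_\pi:=\sum_i (X_{t_i}-X_{t_{i-1}})^2 = Q_\pi+E[S_\pi]$, the sum $S_\pi$ lies in the direct sum of the zeroth and second Wiener chaoses of $X$. On the other hand, $\sum_{s\in D_X}\|P_2(\Delta X_s)\|_{(L^2_X)}=\sqrt{2}\sum_{s\in D_X}E[(\Delta X_s)^2]<\infty$, so the candidate limit $J:=\sum_{s\in D_X}P_2(\Delta X_s)$ converges absolutely in $(L^2_X)$ and belongs to the second chaos.

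The principal obstacle is to upgrade the probabilistic convergence of $S_\pi$ supplied by the hypothesis to convergence in $(L^2_X)$. This is where hypercontractivity enters: by \cite[Theorem 3.50]{Ja}, the sum of the Wiener chaoses of orders at most two is closed in $L^0$, and on it the topologies of convergence in probability and of convergence in $(L^2_X)$ coincide. Since $S_\pi$ lies in this space and converges in probability to $Q_T^\ast:=v(T)+\sum_{s\in D_X}(\Delta X_s)^2$, the limit $Q_T^\ast$ lies in the same space and the convergence holds in $(L^2_X)$. Taking expectations yields $E[S_\pi]\to v(T)+\sum_{s\in D_X}E[(\Delta X_s)^2]=E[Q_T^\ast]$, and subtracting gives $Q_\pi \to Q_T^\ast-E[Q_T^\ast]=J$ in $(L^2_X)$ as $|\pi|\to 0$.

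Plugging this $(L^2_X)$-limit into the duality identity and applying Cauchy-Schwarz yields
\begin{align*}
\sum_{i=1}^n (\inth(t_i)-\inth(t_{i-1}))^2 &\longrightarrow E[\exp^\diamond(h)\,J] \\
&= \sum_{s\in D_X}(\Delta^-\inth(s))^2 = \sigma_2(\inth),
\end{align*}
using the elementary formula $E[\exp^\diamond(h)P_2(Y)]=E[Yh]^2$ for $Y\in H_X$ together with $\Delta^+\inth\equiv 0$. This simultaneously proves $\sigma_2(\inth)<\infty$ and the refinement condition, because any refinement of a partition of sufficiently small mesh itself has small mesh. Since $h\in H_X$ was arbitrary, $X$ is $W^*_2$-regular.
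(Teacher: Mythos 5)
Your proposal is correct and follows essentially the same route as the paper's proof: upgrading the convergence in probability to $(L^2_X)$-convergence on the sum of the first two Wiener chaoses via Theorem 3.50 in \cite{Ja}, centering by the expectations, and then transferring the limit to $\inth$ through the pairing with $\exp^\diamond(h)$ and Cauchy--Schwarz, with the refinement condition deduced from the mesh-size convergence. The only (harmless) addition is your explicit Fernique argument for the $(L^2_X)$-regularity of $t\mapsto X_t$, which the paper leaves implicit.
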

\begin{proof}
 The assumptions guarantee that
$$
\sum_{s\in (0,t] } (\Delta X_s)^2=\sum_{s\in (0,t]\cap D_X } (\Delta X_s)^2
$$
and that this sum converges absolutely in $(L^2_X)$, because
$$
\sum_{s\in (0,t]\cap D_X } E[|(\Delta X_s)^2|^2]^{1/2} =\sqrt{3} \sum_{s\in (0,t]\cap D_X } E[(\Delta X_s)^2]<\infty.
$$ 
As $|\Delta \inth(s)|^2=E[(\exp^\diamond(h)-1)(\Delta X_s)^2]$ due to \eqref{eq:shift}, we, thus, obtain, for every $\inth \in CM_X$,
$$
 \sigma_2(\inth)=\sum_{s\in (0,T] } |\Delta \inth(s)|^2= \sum_{s\in (0,T]\cap D_X} |\Delta \inth(s)|^2<\infty.
$$
Let $(\pi_k)$ be a sequence of partitions of $[0,T]$, whose mesh size tends to zero. Then, by Theorem 3.50 in \cite{Ja}
\begin{equation*}\label{eq:hilf0006}
\sum_{t_i\in \pi_k\setminus \{0\}} (X_{t_i}-X_{t_{i-1}})^2 -  \sum_{s\in D_X\cap (0,T] } (\Delta X_s)^2 \rightarrow v(T) 
\end{equation*}
in $(L^2_X)$ as $k$ goes to infinity, which in turn implies,
\begin{equation*}
 \sum_{t_i\in \pi_k\setminus \{0\}} E[(\exp^\diamond(h)-1) (X_{t_i}-X_{t_{i-1}})^2] -  \sum_{s\in D_X\cap (0,T] } E[(\exp^\diamond(h)-1)(\Delta X_s)^2] \rightarrow 0.
\end{equation*}
By \eqref{eq:shift}, the left-hand side equals,
$$
\sum_{t_i\in \pi_k\setminus \{0\}} (\inth(t_i)-\inth(t_{i-1}))^2 -  \sum_{s\in D_X\cap (0,T] }  |\Delta \inth(s)|^2 
$$ 
 Thus,
$$
\lim_{k\rightarrow \infty} \sum_{t_i\in \pi_k\setminus \{0\}} (\inth(t_i)-\inth(t_{i-1}))^2= \sum_{s\in (0,T]} |\Delta \inth(s)|^2 = \sigma_2(\inth).
$$
 This convergence along the mesh size clearly implies convergence along the direction of refinement of partitions, as required 
in the definition of $W^*_2$.
\end{proof}

We finally assume that $(M_t)_{t\in \R}$ is a a two-sided  RCLL Gaussian martingale with variance function $V_M$ and $X$ has an integral representation of the form
\begin{equation}\label{eq:integral_representation}
X_t=\int_{-\infty}^{+\infty} \g_t(s) dM_s,
\end{equation}
where $\g_t\in L^2(\R, dV_M)$ for every $t\in[0,T]$ and the integral is understood in the It\^o sense. 
In particular, $H_X$ is then a closed subspace of $H_M=\{\int \f(s) dM_s;\; \f\in L^2(\R, dV_M)\}$ and we denote by $\pi_{H_X}$ the orthogonal projection on $H_X$. We also extend $\g_t$ to $t\in\R$ by setting 
$\g_t:=\g_T$ for $t>T$ and $\g_t:=\g_0$ for $t<0$. 
\begin{prop}\label{prop:integral_representation}
 Suppose $\mathfrak{A}$ is a dense subset of $L^2(\R, dV_M)$. Then, the  functions of the form
$$
[0,T]\rightarrow \R,\; t\mapsto E\left[X_t \int_\R \f(s) dM_s\right],\quad f\in \mathfrak{A} 
$$
constitute a dense subset of the Cameron-Martin space $CM_X$ of $X$.
\end{prop}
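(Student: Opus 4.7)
The plan is to reduce the density assertion in $CM_X$ to a density assertion in $H_X$ via the canonical isometry $h \mapsto \inth$, and then exploit that $I_W$ is a global isometry between $\mathfrak{H}_W$ and $H_W$ while $\pi_{H_X}$ is the identity on $H_X$.

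First, I would identify each candidate function as an element of $CM_X$ of a particular form. For $\f \in \mathfrak{A}$, the random variable $I_W(\f)$ lies in $H_W$, but a priori not in $H_X$. However, since $X_t \in H_X$ for every $t\in[0,T]$, the orthogonal projection $\pi_{H_X}: H_W \to H_X$ satisfies
$$
E[X_t\, I_W(\f)] = E[X_t\, \pi_{H_X}(I_W(\f))] = \underline{\pi_{H_X}(I_W(\f))}(t).
$$
Therefore the set of functions in the statement equals $\{\underline{h_\f} : \f \in \mathfrak{A}\}$, where $h_\f := \pi_{H_X}(I_W(\f)) \in H_X$.

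Next, recall from Section \ref{sec_regulated} that $h \mapsto \inth$ is an isometric isomorphism from $H_X$ onto $CM_X$. Consequently, it suffices to prove that the set
$$
\{\pi_{H_X}(I_W(\f));\; \f \in \mathfrak{A}\}
$$
is dense in $H_X$. Since $I_W$ is a linear isometry from $\mathfrak{H}_W$ onto $H_W$ and $\mathfrak{A}$ is dense in $\mathfrak{H}_W$, the image $I_W(\mathfrak{A})$ is dense in $H_W$. The orthogonal projection $\pi_{H_X}$ is a contraction on $H_W$ that fixes every element of $H_X$. Thus, for any $h\in H_X$, density of $I_W(\mathfrak{A})$ in $H_W$ yields a sequence $(\f_n) \subset \mathfrak{A}$ with $I_W(\f_n) \to h$ in $H_W$, whence
$$
\pi_{H_X}(I_W(\f_n)) \to \pi_{H_X}(h) = h
$$
in $H_X$. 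This finishes the proof.

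There is no genuine obstacle here: the statement is essentially a packaged version of the fact that the projection of a dense subset of a Hilbert space onto a closed subspace remains dense in that subspace. The only point worth noting explicitly is the initial replacement of $I_W(\f)$ by its projection $\pi_{H_X}(I_W(\f))$, which is what ensures that the candidate functions actually belong to $CM_X$ rather than to some larger Cameron–Martin space of $W$.
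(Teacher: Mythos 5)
Your proof is correct and follows essentially the same route as the paper: replace $I_W(\f)$ by its projection $\pi_{H_X}(I_W(\f))$ to identify the candidate functions as elements of $CM_X$, and then note that the projection of the dense set $I_W(\mathfrak{A})\subset H_W$ onto the closed subspace $H_X$ remains dense there. You merely spell out the last density step, which the paper leaves as ``clearly''.
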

\begin{proof}
 For $\f \in \mathfrak{A}$ let $f:= \pi_{H_X}(\int \f(s) dM_s)$. Then, $E[X_\cdot \,\int \f(s) dM_s]=E[X_\cdot \,f]$ is indeed an element of the Cameron-Martin space of $X$. 
However, $\{\int \f(s) dM_s;\; \f\in \mathfrak{A}\}$ is dense in $H_M$ by the It\^o isometry, which  implies that $\{\pi_{H_X}(\int \f(s) dM_s);\; \f\in \mathfrak{A}\}$ is dense in $H_X$. This completes the proof.
\end{proof}

The following theorem provides a simple sufficient condition for $X$ to be $W^*_2$-dense in terms of the integral kernel $\g_t$ in the representation \eqref{eq:integral_representation} and the adjoint 
of the densely defined linear operator $$K: L^2(\R, dV_M)\rightarrow L^2(\R, dV_M),\; {\bf 1}_{(a,b]} \mapsto \g_b-\g_a.$$

\begin{thm}\label{thm:integral_representation}
$X$ is $W^*_2$-dense, if the adjoint operator $K^*$ of $K$ is densely defined, i.e.:
The set $D(K^*)$, consisting of those 
$\f \in L^2(\R,dV_M)$ such that there is an $K^*\f \in L^2(\R,dV_M)$ satisfying
$$
\int_\R (\g_t(s)-\g_0(s)) \f(s) dV_M(s) =\int_0^t K^*\f(s) dV_M(s)
$$
for every $t\in [0,T]$, is dense in $L^2(\R,dV_M)$.
\end{thm}
Note that by Theorem VII.2.3 in \cite{Yosida} the sufficient condition in Theorem \ref{thm:integral_representation} is equivalent to the closability of the operator $K$.
\begin{proof}
By the previous proposition, the functions of the form 
$$
t\mapsto \int_0^t K^*\f(s) dV_M(s)+ \int_\R \g_0(s) \f(s) dV_M(s)
$$
are then dense in $CM_X$, because by It\^o's isometry,
$$
E\left[X_t \int_\R \f(s) dM_s\right]= \int_\R \g_t(s) \f(s) dV_M(s).
$$
These functions are clearly of bounded variation, and, hence, members of $W^*_2$.
\end{proof}

\section{Comparison to the literature} \label{sec_comparison}

We finally explain how condition (H) can  be verified in the literature on It\^o's formula for Gaussian processes in the Skorokhod sense. In all the cases discussed below, the authors 
assume or show that the variance function $V$ of $X$ is of bounded variation and continuous:
\begin{enumerate}
 \item \citet{AMN}: The authors assume an integral representation of the form \eqref{eq:integral_representation} with respect to a Brownian motion $M=W$ (on the interval $[0,T]$). 
In the more general `singular' case \citep[Theorem 1]{AMN}, condition (K3) entails that  $t\mapsto E[X_t \int_0^T \f(s) dW_s]$ is continuous and of bounded variation for every step function $\f$.
Hence, by Proposition \ref{prop:integral_representation}, $X$ is $W^*_2$-dense and, moreover, $X$ is stochastically continuous \citep[Theorem 8.21]{Ja}. In particular,  condition (H)
is satisfied.
\item \citet{MV}: Again an integral representation of the form \eqref{eq:integral_representation} with respect to a Brownian motion $M=W$ on an interval is supposed. Proposition 15 in \cite{MV} shows 
that the assumptions of Theorem \ref{thm:integral_representation} are satisfied. Hence, a dense subset of the Cameron-Martin space of $X$ 
is absolutely continuous with respect to the Lebesgue measure with square integrable density. This again implies that (H) holds and $X$ is stochastically continuous. 
\item \citet{NT}: In this reference $X$ is supposed to have continuous paths. The key assumption   is  that $X$ has zero planar quadratic variation along sufficiently uniform partitions.
It\^o's formula can be recovered by our techniques under their assumptions, if the notions such as $W^*_2$-regularity are also formulated in terms of these sufficiently uniform partitions only. However,
in our general framework of Gaussian processes with stochastic discontinuities it does not seem to be possible to restrict to sufficiently uniform partitions as suggested in  
\cite{NT} in the continuous case. 
\item \citet{KRV}: The authors assume that $X$ is stochastically continuous and has a covariance measure. By their Remark 3.1 the latter property is equivalent to the property that 
the covariance function of $X$ has finite planar (1,1)-variation. By Theorem \ref{thm:PV}, $X$ is $W^*_2$-regular and, in particular, satisfies (H).
\item \citet{NT2}: Here the authors assume that $X$ has continuous paths and satisfies the quadratic variation property in our Theorem \ref{thm:QV}. Hence, $X$ is stochastically continuous,
 $W^*_2$-regular, and
 satisfies (H). 
\item \citet{LN} and \citet{HJT}: The assumptions in these two references include that, for every $s\in[0,T]$, the covariance function $t\mapsto R(t,s)$ of $X$ is absolutely continuous with respect to the Lebesgue measure. Thus, 
$X$ is stochastically continuous and satisfies (H) by Remark \ref{rem:Wp} (ii).
\item \citet{AK}: The authors consider a class of Gaussian stationary increment processes and show in their equation (5.1) that the $S$-transform 
$t\mapsto (SX_t)(h)$ of $X$ is absolutely continuous with respect to the Lebesgue measure for a dense subspace of $H_X$. Hence, by \eqref{eq:Stransform1}, $X$ is $W^*_2$-dense and satisfies (H), because it also 
is stochastically continuous. 
\item \citet{Leb}: $X$ is defined in terms of an integral representation of the form \eqref{eq:integral_representation} with respect to a two-sided Brownian motion $M=W$. 
The conditions on the kernel $\g_t$ are such that $t\mapsto E[X_t \int_\R \f dW]$ is absolutely continuous with respect to the Lebesgue measure for every Schwartz function $\f$, see Remark 
1 in \cite{Leb}.  As the Schwartz functions are dense in $L^2(\R,dt)$,   Proposition \ref{prop:integral_representation} again implies that $X$ is stochastically 
continuous and that (H) is satisfied.   
\end{enumerate}

We emphasize again that in all these references, the Gaussian process $X$ is stochastically continuous and, hence, It\^o's formula has the form \eqref{eq:Ito_cts} without jump terms, while 
the main contribution of the present paper is to understand the influence of the stochastic discontinuities on the Gaussian It\^o formula.

\appendix

\section{A chain rule for the Henstock-Kurzweil integral}

In this appendix, we prove a chain rule for the Henstock-Kurzweil integral. The general lines of proof closely follow the arguments in \citet{No}. We have, however, to deal 
with a case of `mixed' regularity which is not covered there. Here is what we are going to show.
\begin{thm}\label{thm:chain_rule}
Suppose $u_1 \in W_2^*([0,T]),\; u_2 \in W_1([0,T])$ (i.e, of bounded variation), and let $u:=(u_1,u_2)$. Denote $$S_i:=\left[\inf_{t\in [0,T]} u_i(t), \sup_{t\in [0,T]} u_i(t)\right],\quad  i=1,2.$$ Suppose $G\in \mathcal{C}^1(S_1\times S_2;\R)$ 
and such that there is a constant $K_1 \geq 0$ and a continuous function $K:S_1\times S_2 \times S_2 \rightarrow \R_{\geq 0}$ satisfying $K(x_1,x_2,x_2)=0$ and  
\begin{eqnarray}\label{eq:reg_chain_rule}
 \left|\frac{\partial G}{\partial x_1}(x_1,x_2)-  \frac{\partial G}{\partial x_1}(y_1,x_2)\right|&\leq& K_1|x_1-y_1|  \\
\left|\frac{\partial G}{\partial x_1}(x_1,x_2)-  \frac{\partial G}{\partial x_1}(x_1,y_2)\right|&\leq& K(x_1,x_2,y_2) \; |x_2-y_2|^{1/2}\nonumber
\end{eqnarray}
for every $x_1,y_1\in S_1$ and $x_2,y_2\in S_2$. Then, $\int_0^T \frac{\partial G}{\partial x_1}(u(s))\, du_1(s)$ exists as Henstock-Kurzweil integral and
\begin{eqnarray*}
&& G(u(T))-G(u(0))\\ &=& \int_0^T \frac{\partial G}{\partial x_1}(u(s))\, du_1(s)+ \int_0^T \frac{\partial G}{\partial x_2}(u(s))\, du_2(s) \\ &&+
 \sum_{s\in (0,T]} G(u(s))-G(u(s-))-  \frac{\partial G}{\partial x_1}(u(s))\Delta^-u_1(s)- \frac{\partial G}{\partial x_2}(u(s))\Delta^-u_2(s) \\ 
&&+
 \sum_{s\in [0,T)} G(u(s+))-G(u(s))-  \frac{\partial G}{\partial x_1}(u(s))\Delta^+u_1(s)- \frac{\partial G}{\partial x_2}(u(s))\Delta^+u_2(s).
\end{eqnarray*}
Here, both sums converge absoulutely. 
\end{thm}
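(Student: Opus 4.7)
The plan is to follow the Riemann-sum strategy of \cite{No} for Henstock-Kurzweil chain rules, adapted to the present mixed-regularity setting where $u_1$ lies only in $W_2^*$ while $u_2$ is of bounded variation. The core device is to apply Taylor's theorem to a telescoping decomposition of $G(u(T))-G(u(0))$ over $\delta$-fine tagged partitions, carefully separating the contributions from the jumps and from the ``continuous'' increments between one-sided limits.

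As a first preparatory step, I would establish absolute convergence of the two jump sums. By Taylor's theorem, for each $s$ the summand in the left-jump sum equals $[\partial_{x_1}G(\xi_s)-\partial_{x_1}G(u(s))]\Delta^-u_1(s)+[\partial_{x_2}G(\xi_s)-\partial_{x_2}G(u(s))]\Delta^-u_2(s)$ for some $\xi_s$ on the segment from $u(s-)$ to $u(s)$. The regularity assumption (\ref{eq:reg_chain_rule}) bounds the first bracket by $K_1|\Delta^-u_1(s)|+K(\cdots)|\Delta^-u_2(s)|^{1/2}$, while continuity of $\partial_{x_2}G$ on the compact box $S_1\times S_2$ bounds the second by a uniform constant. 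Summability then follows from $u_1\in W_2^*$ (giving $\sum|\Delta^-u_1(s)|^2<\infty$), from $u_2\in W_1$ (giving $\sum|\Delta^-u_2(s)|<\infty$), and from one application of Cauchy-Schwarz to the mixed term $|\Delta^-u_1(s)|\cdot|\Delta^-u_2(s)|^{1/2}$. The right-jump sum is handled identically.

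For the main step, given $\epsilon>0$, I would construct a gauge $\delta:[0,T]\to(0,\infty)$ and a partition $\lambda$ witnessing the $W_2^*$-approximation property for $u_1$. The gauge is designed so that, for every $\delta$-fine tagged partition $\tau=\{([t_{i-1},t_i],y_i)\}$ refining $\lambda$, the large jumps of $u_1$ and $u_2$ are isolated as tags of their own intervals, and the one-sided limits $u(t_i-)$ and $u(t_{i-1}+)$ are well approximated by values of $u$ near $y_i$ in the sense needed for the regularity bounds on $G$. Telescoping $G(u(T))-G(u(0))$ and decomposing each increment as $[G(u(t_{i-1}+))-G(u(t_{i-1}))]+[G(u(t_i-))-G(u(t_{i-1}+))]+[G(u(t_i))-G(u(t_i-))]$ allocates the boundary jumps (those tied to the endpoints of partition intervals) into the two jump sums of the theorem, while Taylor expansion of the middle part around $u(y_i)$ produces the Riemann-Stieltjes-type sums $\sum_i\partial_{x_1}G(u(y_i))(u_1(t_i-)-u_1(t_{i-1}+))$ and $\sum_i\partial_{x_2}G(u(y_i))(u_2(t_i-)-u_2(t_{i-1}+))$ plus a Taylor remainder.

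The hard part will be controlling this remainder. By (\ref{eq:reg_chain_rule}), it decomposes into a pure-$u_1$ piece of order $K_1\sum_i(u_1(t_i-)-u_1(t_{i-1}+))^2$, made small by the $W_2^*$-refinement property of $u_1$, and a mixed piece of order $\sum_i|u_1(t_i-)-u_1(t_{i-1}+)|\cdot|u_2(t_i-)-u_2(t_{i-1}+)|^{1/2}$, bounded via Cauchy-Schwarz by $(\sum_i(u_1(t_i-)-u_1(t_{i-1}+))^2)^{1/2}\cdot v_1(u_2)^{1/2}$, which again vanishes along refinements of $\lambda$. Once both remainders are shown to be $O(\epsilon)$, the sum in $\partial_{x_1}G$ identifies the Henstock-Kurzweil integral $\int_0^T\partial_{x_1}G(u(s))\,du_1(s)$ (the discrepancy from the standard Riemann evaluations $u_1(t_i)-u_1(t_{i-1})$ at tags $y_i$ being absorbed into the jump terms by the gauge construction), while the $\partial_{x_2}G$-sum converges to the Lebesgue-Stieltjes integral against $\mu_{u_2}$, which agrees with $\int_0^T\partial_{x_2}G(u(s))\,du_2(s)$ by Remark \ref{rem:HK} since $\partial_{x_2}G(u(\cdot))$ is regulated, hence bounded and measurable.
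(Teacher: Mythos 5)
Your decomposition and the key estimates coincide with the paper's: telescope each increment into the two endpoint jumps plus the open-interval increment $G(u(t_i-))-G(u(t_{i-1}+))$, expand the latter by the mean value theorem around the tag, control the pure-$u_1$ remainder via the refinement property of the $2$-variation over open subintervals (Lemma 4.1 of \citet{No}), control the mixed remainder by playing $\sum_i v_2(u_1,(t_{i-1},t_i))$ against $v_1(u_2)$, and get absolute convergence of the jump sums from $\sum_s|\Delta^{\pm}u_1(s)|^2+\sum_s|\Delta^{\pm}u_2(s)|<\infty$. (Your Cauchy--Schwarz treatment of the mixed term is a harmless variant of the paper's Young-inequality argument and in fact only needs boundedness of $K$, not $K(x_1,x_2,x_2)=0$.) One omission is minor but real: the Taylor remainder contains a third piece, $\sum_i\bigl(\frac{\partial G}{\partial x_2}(\theta_i)-\frac{\partial G}{\partial x_2}(u(y_i))\bigr)\bigl(u_2(t_i-)-u_2(t_{i-1}+)\bigr)$, which is not covered by \eqref{eq:reg_chain_rule} (that hypothesis constrains only $\partial G/\partial x_1$); it must be handled, as in the paper's term $(II)$, by uniform continuity of $\partial G/\partial x_2$ on the compact box together with a partition making $\Osc(u,(t_{i-1},t_i))$ uniformly small.

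The substantive gap is the clause ``the discrepancy from the standard Riemann evaluations $u_1(t_i)-u_1(t_{i-1})$ at tags $y_i$ being absorbed into the jump terms by the gauge construction.'' A regulated $u_1\in W_2^*$ may have a countable, possibly dense, set of jump times; a gauge can force only finitely many prescribed points to appear as tags, so the tail of small jumps, and the mismatch between $\frac{\partial G}{\partial x_1}(u(y_i))\Delta^{-}u_1(t_i)$ and the term $\frac{\partial G}{\partial x_1}(u(t_i))\Delta^{-}u_1(t_i)$ appearing in your jump sum, each require their own summability estimates. Moreover the $W_2^*$ hypothesis is stated for refinements of partitions, not for $\delta$-fine tagged partitions, so an extra translation step is needed. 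This is all feasible, but it amounts to reproving the comparison between Stieltjes-type integrals that the paper deliberately avoids: the paper establishes the formula for the \emph{Young--Stieltjes} integral, whose sums contain the one-sided limits $u_1(t_i-)$, $u_1(t_{i-1}+)$ and the endpoint jumps exactly and whose convergence is taken along refinements, and then transfers to the Henstock--Kurzweil integral by \citet[Part I, Theorem F.2]{DN} after checking that $\frac{\partial G}{\partial x_1}(u(\cdot))$ has finite $2$-variation (a direct consequence of \eqref{eq:reg_chain_rule}). To complete your proof you should either carry out the gauge argument in full or adopt this detour; in either case the finite-$2$-variation check for the integrand, currently missing from your write-up, is needed.
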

Note first that the integral with respect to $u_2$ exists as Lebesgue-Stieltjes integral, because $u_2$ is of bounded variation and the integrand is regulated. The chain rule above, thus, holds 
as a consequence of Theorem 4.2 (with $\alpha=1$) in \citet{No} under the stronger Lipschitz condition 
\begin{eqnarray*}
 && \left|\frac{\partial G}{\partial x_1}(x_1,x_2)-  \frac{\partial G}{\partial x_1}(y_1,y_2)\right|+\left|\frac{\partial G}{\partial x_2}(x_1,x_2)-  
\frac{\partial G}{\partial x_2}(y_1,y_2)\right|\\&\leq& K_1(|x_1-y_1|+|x_2-y_2|),
\end{eqnarray*}
which is not satisfied in our application of this chain rule in Section \ref{sec_ito}. 

As in \citet{No}, we shall show that the chain rule is valid in the sense of Young-Stieltjes integration and then
 exploit the relationship between the Young-Stieltjes integral and the Henstock-Kurzweil 
integral as stated in  \citet[][Part I, Theorem F.2]{DN}. We, thus, first recall the construction of the Young-Stieltjes integral. 

A \emph{Young-tagged partition} $\tau:=\{((s_{i-1},s_i),y_i);\;i=1,\ldots,n\}$ of the interval $[0,T]$, by definition, satisfies $0=s_0<s_1<\ldots s_n=T$ and $y_i\in(s_{i-1},s_i)$, whereas 
in a tagged partition the tag point $y_i$ lies in the closed interval $[s_{i-1},s_i]$. Given such a Young-tagged partition $\tau$ and regulated functions $u,r:[0,T]\rightarrow \R$, one considers 
the \emph{Young-Stieltjes sums}
$$
S_{YS}(u,r,\tau):=\sum_{i=1}^n u(s_{i-1})\Delta^+r(s_{i-1})+u(y_i)(r(s_{i}-)-r(s_{i-1}+))+ u(s_{i})\Delta^-r(s_{i})
$$
The \emph{Young-Stieltjes integral} of $u$ with respect to $r$ is said to exist, if there is a real number $I$ such that, for every $\epsilon>0$, there is a 
Young-tagged partition $\chi$ such that for every refinement 
$\tau$ of $\chi$
$$
|S_{YS}(u,r,\tau)-I|<\epsilon.
$$
In this case, $I$ is defined to be the value of this integral. In the above, a Young-tagged partition $\tau=\{((s_{i-1},s_i),y_i);\;i=1,\ldots,n\}$ 
is said to be a \emph{refinement} of a Young-tagged partition $\chi$, if the partition $\kappa(\tau):=\{s_0,\ldots, s_n\}$ is a refinement of the partition $\kappa(\chi)$, i.e. $\kappa(\tau)\supset \kappa(\chi)$.

\begin{proof}[Proof of Theorem \ref{thm:chain_rule}]
 Define $S:=S_1\times S_2$, $S':=S_1\times S_2\times S_2$.  We consider for every Young-tagged partition $\tau:=\{((s_{i-1},s_i),y_i);\;i=1,\ldots,n\}$
\begin{eqnarray*}
V^+(\tau)&=& \sum_{i=0}^{n-1} G(u(s_i+))-G(u(s_i))-  \frac{\partial G}{\partial x_1}(u(s_i))\Delta^+u_1(s_i) \\ && \quad\quad- \frac{\partial G}{\partial x_2}(u(s_i))\Delta^+u_2(s_i) \\
V^-(\tau)&=& \sum_{i=1}^n G(u(s_i))-G(u(s_i-))-  \frac{\partial G}{\partial x_1}(u(s_i))\Delta^-u_1(s_i)\\ &&\quad\quad- \frac{\partial G}{\partial x_2}(u(s_i))\Delta^-u_2(s_i) \\
R(\tau)&=& \sum_{i=1}^n G(u(s_i-))-G(u(s_{i-1}+))- \frac{\partial G}{\partial x_1}(u(y_i))\\ &&\quad\times (u_1(s_i-)-u_1(s_{i-1}+))\ - \frac{\partial G}{\partial x_2}(u(y_i))(u_2(s_i-)-u_2(s_{i-1}+)).
\end{eqnarray*}
Then,
\begin{eqnarray}\label{eq:YS_decomposition}
 S_{YS}\left(\frac{\partial G}{\partial x_1}(u),u_1,\tau\right)&=&G(u(T))-G(u(0))-S_{YS}\left(\frac{\partial G}{\partial x_2}(u),u_2,\tau\right) \\ && -V^+(\tau)-V^-(\tau)-R(\tau).\nonumber
\end{eqnarray}
We fix an arbitrary $\epsilon>0$. 
As $\frac{\partial G}{\partial x_2}(u)$ is regulated and $u_2$ is of bounded variation, the Young-Stieltjes integral of $\frac{\partial G}{\partial x_2}\circ u$ with respect to $u_2$ exists 
and coincides with the Henstock-Kurzweil integral and the Lebesgue-Stieltjes integral by Theorems I.4.2, I.F.2 and Corollary II.3.20 in \citet{DN}. Hence, there is a Young-tagged partition $\chi$ of $[0,T]$ such that for all refinements $\tau$ of $\chi$
\begin{equation}
 \left|S_{YS}\left(\frac{\partial G}{\partial x_2}(u),u_2,\tau\right)-\int_0^T \frac{\partial G}{\partial x_2}(u(s))\,du_2(s) \right|<\epsilon/4.
\end{equation}
We next treat the jumps from the right. Suppose $s\in[0,T)$ such that $\Delta^+u_l(s) \neq 0$ for $l=1$ or $l=2$. Note that the set of such time points $s$ is at most countable, since $u_1$ and $u_2$ are regulated. By the 
mean value theorem there are $\theta_l\in [u_l(s)\wedge u_l(s+), u_l(s)\vee u_l(s+)]$, $l=1,2$ such that for $\theta=(\theta_1,\theta_2)$
$$
G(u(s+))-G(u(s))= \frac{\partial G}{\partial x_1}(\theta)\Delta^+u_1(s)+ \frac{\partial G}{\partial x_2}(\theta)\Delta^+u_2(s).
$$
Define
$$
V^+_s:=G(u(s+))-G(u(s))-  \frac{\partial G}{\partial x_1}(u(s))\Delta^+u_1(s)- \frac{\partial G}{\partial x_2}(u(s))\Delta^+u_2(s).
$$
Let $K_2:=\max_{(x_1,x_2,\tilde x_2)\in S'} K(x_1, x_2, \tilde x_2)$ and $K_3:= 2\max_{(x_1,x_2)\in S} |\frac{\partial G}{\partial x_2}(x_1,x_2)|$.
Then, by \eqref{eq:reg_chain_rule} and  Young's inequality,
\begin{eqnarray*}
 |V_s^+|&\leq& |\Delta^+u_1(s)|(K_1 |u_1(s)-\theta_1|+K_2|u_2(s)-\theta_2|^{1/2}) + K_3|\Delta^+u_2(s)| \\
&\leq & (K_1+K_2/2) |\Delta^+u_1(s)|^2+ (K_3+K_2/2)|\Delta^+u_2(s)|.
\end{eqnarray*}
Hence, for some constant $\tilde K>0$
$$
\sum_{s\in [0,T)} |V_s^+| \leq  \tilde K\left(\sum_{s\in [0,T)}  |\Delta^+u_1(s)|^2+\sum_{s\in [0,T)}  |\Delta^+u_2(s)| \right)<\infty,
$$
because $u_1\in W^*_2$ and $u_2$ is of bounded variation. Thus, the sum over the jumps from the right in the asserted chain rule converges absolutely. We can, then, find a finite 
subset $\mu\subset [0,T)$ such that 
$$
\sum_{s\in [0,T)\setminus \mu} |V_s^+| \leq  \epsilon/4.
$$
By passing to a refinement, if necessary, we can assume without loss of generality, that $\mu\subset \kappa(\chi)$. Then, for every refinement $\tau$ of $\chi$
\begin{eqnarray}
&& \Bigl| V^+(\tau)- \sum_{s\in [0,T)} G(u(s+))-G(u(s))-  \frac{\partial G}{\partial x_1}(u(s))\Delta^+u_1(s) \nonumber\\ &&\quad\quad- \frac{\partial G}{\partial x_2}(u(s))\Delta^+u_2(s)\Bigr|\leq \epsilon/4.
\end{eqnarray}
By the same argument, the sum over the jumps from the left converges absolutely and for every refinement $\tau$ of $\chi$
\begin{eqnarray}
&&\Bigl| V^-(\tau)- \sum_{s\in (0,T]} G(u(s))-G(u(s-))-  \frac{\partial G}{\partial x_1}(u(s))\Delta^-u_1(s)\nonumber\\ &&\quad\quad - \frac{\partial G}{\partial x_2}(u(s))\Delta^-u_2(s)\Bigr|\leq \epsilon/4.
\end{eqnarray}
It remains to treat the remainder term $R(\tau)$. Again, by the mean value theorem, there are $\theta_{l,i}\in [u_l(s_i-)\wedge u_l(s_{i-1}+), u_l(s_i-)\vee u_l(s_{i-1}+)]$, $l=1,2$, 
$i=1,\ldots, n$, such that for $\theta_i:=(\theta_{1,i},\theta_{2,i})$, $i=1,\ldots,n$,
\begin{eqnarray*}
 R(\tau)&=& \sum_{i=1}^n \left(\frac{\partial G}{\partial x_1}(\theta_i)- \frac{\partial G}{\partial x_1}(u(y_i))\right)(u_1(s_i-)-u_1(s_{i-1}+)) \nonumber \\ && +
\sum_{i=1}^n \left(\frac{\partial G}{\partial x_2}(\theta_i) -\frac{\partial G}{\partial x_2}(u(y_i))\right)(u_2(s_i-)-u_2(s_{i-1}+))\\&=:&(I)+(II).
\end{eqnarray*}
In order to estimate these two terms separately, we need some extra notation. We write
$$
\Osc(u, E):=\max_{l=1,2}\; \sup\{|u_l(s)-u_l(t)|;\; s,t \in E)\},\quad E\subset [0,T],
$$
for the oscillation of $u$ over the set $E$. We denote the total variation of $u_2$ over $[0,T]$ by $v_1(u_2)$ and the 2-variation of $u_1$ over the open interval 
$(s_{i-1},s_{i})$ by $v_2(u_1,(s_{i-1},s_i))$, i.e.
\begin{eqnarray*}
&&v_2(u_1,(s_{i-1},s_i))\\&:=&\sup\Bigl\{ \sum_{j=1}^m |u_1(t_j)-u_1(t_{j-1})|^2;\; m\in \N,\; s_{i-1}<t_0<t_1<\cdots<t_m<s_{i}\Bigr\}.
\end{eqnarray*}
Noting that, for every $l=1,2$ and $i=1,\ldots,n$ and $p\geq 1$, 
$$
|u_l(y_i)-\theta_{l,i}|^p\leq \max\{ |u_l(s_{i}-)-u_l(y_i)|^p, |u_l(y_i)- u_l(s_{i-1}+)|^p\},
$$
we obtain, by \eqref{eq:reg_chain_rule} and Young's inequality,
\begin{eqnarray*}
 |(I)|&\leq& \sum_{i=1}^n K_1 |\theta_{1,i}-u_1(y_i)||u_1(s_i-)-u_1(s_{i-1}+)|  \\ && +\sum_{i=1}^n K(\theta_{1,i},\theta_{2,i}, u_2(y_i))|\theta_{2,i}-u_2(y_i)|^{1/2}  |u_1(s_i-)-u_1(s_{i-1}+)| 
\\ &\leq & \sum_{i=1}^n \frac{K_1+K_2}{2}   |u_1(s_i-)-u_1(s_{i-1}+)|^2 +  \frac{K_1}{2} |\theta_{1,i}-u_1(y_i)|^2 \\
&& +  \sum_{i=1}^n \frac{1}{2} K(\theta_{1,i},\theta_{2,i},u_2(y_i))|\theta_{2,i}-u_2(y_i)| \\
&\leq&  \frac{2K_1+K_2}{2}  \sum_{i=1}^n v_2(u_1,(s_{i-1},s_i)) \\ && +\frac{v_1(u_2)}{2} \sup\{K(x_1,x_2,\tilde x_2);\\ && \quad\quad\quad(x_1,x_2,\tilde x_2) \in S' ,\; |x_2-\tilde x_2|\leq 
\max_{j=1,\ldots,n} \Osc(u, (s_{i-1},s_i)))\}.
\end{eqnarray*}
Moreover,
\begin{eqnarray*}
|(II)|&\leq&  v_1(u_2) \sup\{|\frac{\partial G}{\partial x_2}(z_1)-\frac{\partial G}{\partial x_2}(z_2)|;\\ &&\quad\quad\quad \; z_1,z_2 \in S,\; |z_1-z_2|_\infty\leq 
\max_{j=1,\ldots,n} \Osc(u, (s_{i-1},s_i)))\},
\end{eqnarray*}
where $|\cdot|_\infty$ denotes the maximum norm in $\R^2$.
By uniform continuity of $\frac{\partial G}{\partial x_2}$ on $S$ and of $K$ on $S'$, there is a $\delta>0$ such that 
$$
\max\left\{K(x_1,x_2,\tilde x_2),\;  \left|\frac{\partial G}{\partial x_2}(x_1,x_2)-\frac{\partial G}{\partial x_2}(\tilde x_1,\tilde x_2)\right|\right\} \leq \frac{\epsilon}{16 v_1(u_2)}
$$
for every $(x_1,x_2),\, (\tilde x_1,\tilde x_2)\in S$ with $|(x_1,x_2)-(\tilde x_1,\tilde x_2)|_\infty\leq \delta$. As $u_1$ and $u_2$ are regulated, there is a partition $\lambda=\{t_0,\ldots, t_m\}$ of $[0,T]$ such that
$$
\max_{j=1,\ldots,m}  \Osc(u, (t_{j-1},t_j))\leq \delta.
$$ 
Finally, by the equivalent characterization of $W^*_2$ in Lemma 4.1 of \citet{No}, there is a partition $\tilde \lambda$ of $[0,T]$ such that 
for every refinement $\{x_1,\ldots, x_k\}$ of $\tilde\lambda$
$$
\sum_{j=1}^k v_2(u_1,(x_{j-1},x_j)) \leq  \frac{\epsilon}{8K_1 +4K_2}.
$$
We may and shall again assume that $\lambda\cup \tilde\lambda\subset \kappa(\chi)$, by passing to a refinement of $\chi$, if necessary. Then, for every refinement $\tau$ of $\chi$,
\begin{equation}\label{eq:remainder}
| R(\tau)| \leq  \frac{\epsilon}{4}.
\end{equation}
Gathering \eqref{eq:YS_decomposition}--\eqref{eq:remainder}, we observe that  $\int_0^T \frac{\partial G}{\partial x_1}(u(s))\, du_1(s)$ exists as Young-Stieltjes integral and satisfies 
the asserted chain rule. By Theorem F.2 in Part I of \citet{DN}, this integral exists as Henstock-Kurzweil integral and coincides with the Young-Stieltjes integral, provided 
the integrand belongs to $W_2([0,T])$, i.e. has finite 2-variation over $[0,T]$. However, by \eqref{eq:reg_chain_rule}, there is a constant $\tilde K\geq 0$ such that
 for every  $m\in \N$ and $0=t_0<t_1<\cdots<t_{m-1}<t_m=T$, 
$$
\sum_{j=1}^m \left|  \frac{\partial G}{\partial x_1}(u(t_j))-\frac{\partial G}{\partial x_1}(u(t_{j-1})) \right|^2\leq \tilde K (v_2(u_1)+v_1(u_2))<\infty,
$$
 recalling that the $p$-variation was defined in Remark \ref{rem:Wp}. 
\end{proof}

\end{document}